\documentclass[preprint,3p,authoryear]{elsarticle}
\journal{Journal of Inequalities and Applications}
\usepackage{amsthm,amssymb,amsmath}
\usepackage{enumerate}
\usepackage{algorithm}
\usepackage{algorithmic}
\usepackage{multirow}
\usepackage{xcolor}
\usepackage{color}
\DeclareMathOperator*{\argmin}{argmin}

\usepackage{url}
\usepackage{cases}
\usepackage{hyperref}
%\usepackage[numbers]{natbib}

%\usepackage[a4paper,left=2cm,right=2cm,top=2cm,bottom=2cm]{geometry}
%\usepackage{showkeys}
%\usepackage{showtags}
% ----------------------------------------------------------------
\vfuzz2pt % Don't report over-full v-boxes if over-edge is small
\hfuzz2pt % Don't report over-full h-boxes if over-edge is small
% THEOREMS -------------------------------------------------------
%\numberwithin{equation}{section}
\newtheorem{theorem}{Theorem}[section]
\newtheorem{definition}{Definition}[section]
\newtheorem{proposition}{Proposition}[section]
\newtheorem{corollary}{Corollary}[section]
\newtheorem{lemma}{Lemma}[section]

\newtheorem{remark}{Remark}
% MATH -----------------------------------------------------------

%\newcommand{\Xa}{X_{(\alpha)}}

\usepackage{geometry}
\geometry{b5paper,left=1.2cm,right=1.2cm,top=1cm,bottom=1cm}
% ----------------------------------------------------------------
\begin{document}

\begin{frontmatter}
\title{\large Compound Poisson Point Processes, Concentration and Oracle Inequalities}
\small{
\address[label1]{School of Mathematical Sciences and Center for Statistical Science, Peking University, Beijing, 100871, China}
\author[label1]{Huiming Zhang}
\ead{zhanghuiming@pku.edu.cn}
\address[label2]{Mathematics Department, Rutgers University, Piscataway, NJ 08854, USA}
\author[label2]{Xiaoxu Wu}
\ead{xw292@rutgers.edu}

\begin{abstract}
This note aims at presenting several new theoretical results for the
compound Poisson point process, which follows the work of Zhang \emph{et al.} [Insurance~Math.~Econom.~59(2014), 325-336]. The first part
provides a new characterization for a discrete compound Poisson point
process (proposed by {Acz{\'e}l} [Acta~Math.~Hungar.~3(3)(1952), 219-224]),
it extends the characterization of the Poisson point
process given by Copeland and Regan [Ann.~Math.~(1936): 357-362]. Next,
we derive some concentration inequalities for discrete compound Poisson
point process (negative binomial random variable with unknown
dispersion is a significant example). These concentration inequalities
are potentially useful in count data regressions. We give an application
in the weighted Lasso penalized negative binomial regression whose KKT
conditions of penalized likelihood hold with high probability and then
we derive non-asymptotic oracle inequalities for a weighted Lasso
estimator.

\end{abstract}
\begin{keyword}
characterization of point processes \sep Poisson random measure \sep concentration inequalities \sep sub-Gamma random variables\sep high-dimensional negative binomial regressions.\\

2010 Mathematics Subject Classification: 60G55,	60G51, 62J12
\end{keyword}

}
\end{frontmatter}

\section{Introduction}

Powerful and useful concentration inequalities of empirical processes
have been derived one after another for several decades. Various
types of concentration inequalities are mainly based on moments
condition (such as sub-Gaussian, sub-exponential and sub-Gamma) and bounded difference condition; see
\cite{Boucheron13}. Its central task is to evaluate the
fluctuation of empirical processes from some value (for example, the
mean and median) in probability. Attention has greatly expanded in
various area of research such as high-dimensional statistics, models, etc. For the Poisson or negative binomial count data regression (\cite{Hilbe11}), it should be noted that the Poisson
distribution (the limit distribution of negative binomial is Poisson)
is not sub-Gaussian, but locally sub-Gaussian distribution
(\cite{Chareka06}) or sub-exponential distribution
(see Sect.~2.1.3 of \cite{Wainwright19}). However, few relevant results are recognized about the concentration inequalities for a weighted sum of negative binomial (NB) random variables and its statistical applications. With applications to the segmentation of RNA-Seq data, a documental example is that \cite{Cleynen14} obtained a concentration inequality of the sum of independent centered negative binomial random variables and its derivation hinges on the assumption that the dispersion parameter is known. Thus, the negative binomial random variable belongs to the exponential family. But, if the dispersion parameter in negative binomial random variables $X$ is unknown in real world problems, it
does not belong to the exponential family with density
\begin{equation}
\label{eq:exponential} f_{X}(x\mid \theta ) = h(x) \exp \bigl(\eta (
\theta )T(x) -A(\theta ) \bigr),
\end{equation}
where $T(x)$, $h(x)$, $\eta (\theta )$, and $A(\theta )$ are some given
functions.

Whereas, it is well-known that Poisson and negative binomial distributions belong to the family of infinitely divisible distributions. \cite{Houdre02a} studies dimension-free concentration inequalities for non-Gaussian infinitely divisible random vectors with finite exponential moments. In particular, the geometric case, a special case of negative binomial, has been obtained in \cite{Houdre02a}. Via
the entropy method, \cite{Kontoyiannis06} gives a simple derivation
of the concentration inequalities for a Lipschitz function of discrete compound Poisson random
variable. Nonetheless, when deriving oracle inequalities of Lasso or
Elastic-net estimates (see \cite{Ivanoff16}, Zhang and Jia
\cite{Zhang17}), it is strenuous to use the results of
\cite{Kontoyiannis06}, \cite{Houdre02a} to get an explicit
expression of tuning parameter under the KKT conditions of the penalized
negative binomial likelihood (Poisson regression is a particular case).
Moreover, when the negative binomial responses are treated
as sub-exponential distributions (distributions with the Cramer type
condition), the upper bound of the $\ell _{1}$-estimation error oracle
inequality is determined by the tuning parameter with the rate
$O( {\frac{{\log p}}{n}} )$, which does not show rate-optimality in the
minimax sense, although it is sharper than $O(\sqrt{
\frac{{\log p}}{n}} )$. Consider linear regressions $Y = X\beta ^{*} +
\varepsilon $ with noise $\operatorname{Var} \varepsilon = {\sigma
^{2}}{I_{n}}$ and $\beta ^{*}$ being the $p$-dimensional true parameter.
Under the $\ell _{0}$-constraints on $\beta ^{*}$, \cite{Raskutti2011} shows that the lower bound on the minimax prediction error for all estimators $\hat{\beta }$ is
\begin{equation*}
\inf _{\hat{\beta}} \sup _{\|\beta^*\|_{0} \leq s_{0}} \frac{1}{n}\|X \hat{\beta}-X \beta^*\|_{2}^{2} \ge O(\frac{s_{0}\log \left(p / s_{0}\right)}{n}),\quad (X \text{ is an } n \times p \text{ design matrix})
\end{equation*}
with probability at least $1/2$. Thus, the optimal minimax rate for
$\| \hat{\beta }-\beta ^{*}\|_{1}$ should be proportional to the root of
the rate $O( {\frac{{\log p}}{n}} )$. Similar minimax lower bound for
Poisson regression is provided in \cite{Jiang2015};
see Chap.~4 of \cite{Rigollet19} for more minimax theory.

This paper is prompted by the motif of high-dimensional count regression
that derives useful NB concentration inequalities which can be obtained
under compound Poisson frameworks. The obtained concentration
inequalities are beneficial to optimal high-dimensional inferences. Our
concentration inequalities are about the sum of independent centered
weighted NB random variables, which is more general than the
un-weighted case in \cite{Cleynen14}, which supposes that the
over-dispersion parameter of NB random variables is known. However, in
our case, the over-dispersion parameter can be unknown. As a by-product,
we proposed a characterization for discrete compound Poisson point
processes following the work in \cite{Zhang14} and \cite{Zhang16}. The paper will end up with some applications of high-dimensional NB regressions. \cite{Stadler10} studies the oracle inequalities for $\ell_{1}$-penalization finite mixture of Gaussian regressions model but they didn't concern the mixture of count data regression (such as mixture Poisson, see \cite{Yang19} for example). Here the NB regression can be approximately viewed as finite mixture of Poisson regression since NB distribution is equivalent to a continuous mixture of Poisson distributions where the mixing distribution of the Poisson rate is gamma distributed. The paper will end up with some applications of high-dimensional NB regression in terms of oracle inequalities.

The paper is organized as follows. Section~\ref{sec2} is an introduction to discrete compound Poisson point processes (DCPP). We give a theorem for characterizing DCPP, which is similar to the result that initial condition, stationary and independent increments, condition of jumps, together quadruply characterize a compound Poisson process, see \cite{Wang1993} and the
references therein. In Sect.~\ref{sec3}, we derive the concentration inequality for discrete compound
Poisson point process with infinite-dimensional parameters. As an application, for the optimization
problem of weighted Lasso penalized negative binomial regression, we show that the true parameter version of
KKT conditions hold with high probability, and the optimal data-driven weights in weighted Lasso penality
are determined. We also show the oracle inequalities for weighted Lasso estimates in NB regressions,
and the convergence rate attain the minimax rate derived in references.

\section{Introductions to discrete compound Poisson point process}\label{sec2}
In this section, we present the preliminary knowledge for discrete compound Poisson point process. A new characterization of the point process with five assumptions is obtained.

\subsection{Definition and preliminaries}
To begin with, we need to be familiar with the definition of the
discrete compound Poisson distribution and its relation to the weighted sum
of Poisson random variables. For more details, we refer the reader to
Sect.~9.3 of \cite{johnson05}, \cite{Zhang14}, \cite{Zhang16} and the references therein.

\begin{definition}
\label{def-dcp}
We say that $Y$ is discrete compound Poisson (DCP) distributed if the
characteristic function of $Y$ is
\begin{equation}
\label{eq:cf} {\varphi _{Y}}(t) = \mathrm{{E}}
{e^{\mathrm{{i}}tY}} = \exp \Biggl\{ \sum_{i = 1}^{\infty }{{
\alpha _{i}}\lambda \bigl({e^{\mathrm{{i}}t
\theta }} - 1 \bigr)} \Biggr\}
\quad ( \theta \in \mathbb{R}),
\end{equation}
where $({\alpha _{1}}\lambda ,{\alpha _{2}}\lambda , \ldots )$ are
infinite-dimensional parameters satisfying $\sum_{i = 1}^{\infty }
{\alpha _{i}} = 1$, ${\alpha _{i}} \ge 0$, \mbox{$\lambda > 0$}. We denote it by
$Y \sim \operatorname{DCP}({\alpha _{1}}\lambda ,{\alpha _{2}}\lambda ,
\ldots )$.
\end{definition}

If ${\alpha _{i}}=0$ for all $i > r$ and ${\alpha _{r}} \ne 0$, we say
that $Y$ is DCP distributed of order $r$. If $r =  +
\infty $, then the DCP distribution has infinite-dimensional parameters
and we say it is DCP distributed of order $ + \infty $. When
$r=1$, it is the well-known Poisson distribution for modeling
equal-dispersed count data. When $r=2$, we call it a Hermite
distribution, which can be applied to model the over-dispersed and
multi-modal count data; see \cite{Giles10}.

Equation \eqref{eq:cf} is the canonical representation of characteristic
function for non-negative valued infinitely divisible random variable,
i.e. a special case of the L\'{e}vy--Khinchine formula (see Chap.~2
of \cite{Sato13} and Sect.~1.6 of \cite{Petrov1995}),
\begin{equation*}
{\mathrm{{E}}} {e^{\mathrm{{i}}t Y}} = \exp \biggl( {a\mathrm{{i}}t -
\frac{1}{2}{\sigma ^{2}} {t^{2}} +
\int _{\mathbb{R}\backslash \{ 0\} } { \bigl({e^{\mathrm{{i}}tx}} - 1 -\mathrm{{i}}t x{
{\rm{I}}_{| x | < 1}}(x) \bigr) \nu (dx)} } \biggr),
\end{equation*}
where $a \in \mathbb{R}$, $\sigma \ge 0$ and $\mathrm{I}_{\{\cdot \}}(x)$
is indicator function, the function $\nu (\cdot )$ is called the
L\'{e}vy measure with restriction: $\int _{\mathbb{R}\backslash \{ 0
\} } {\min ({x^{2}},1)\nu(dx)} < \infty $.

Let ${\delta _{x}}(t) := 1_{\{ x\}}(t)$ and $\mathbb{N}^{+}$ be the
positive integer set. If we set $\nu (t) = \sum_{x \in \mathbb{N}^{+}} {\lambda {\delta _{x}}(t){\alpha _{x}}}
$, the $\nu (\cdot )$ is the L\'{e}vy measure in the
L\'{e}vy--Khinchine formula. It is easy to see that $Y$ has a weighted
Poisson decomposition: $Y = \sum_{i = 1}^{\infty }{i{N_{i}}} $,
where the ${{N_{i}}}$ are independent Poisson distributed with mean
$\lambda {\alpha _{i}}$. This decomposition is also called a
L\'{e}vy--Ito decomposition; see Chap.~4 of Sato \cite{Sato13}. If
$\mathrm{{E}}{e^{ - \theta Y}}<\infty $ for $\theta $ in a neighbourhood
of zero, the moment generating function (m.g.f.) of DCP is\vspace*{-2pt}
\begin{equation*}
{M_Y}(\theta ):={\rm{E}}{e^{ - \theta Y}} = \exp \{ \sum\limits_{k = 1}^\infty  {{\alpha _k}\lambda ({e^{ - k\theta }} - 1)} \}= \exp \{ \int_0^\infty  {({e^{-\theta x}} - 1)} \nu ({\rm{dx}})\} ,~~(|\theta|< R)
\end{equation*}
for some $R>0$.

In order to define the discrete compound Poisson point process, we shall
repeatedly use the concept of Poisson random measure. Let $A$ be any
measurable set on a measurable space $(E, \mathcal{A})$, a good example could be  $E=\mathbb{R}^{d}$. In the following sections, we let $E=\mathbb{R}
^{d}$ and denote $N(A,\omega )$ as the number of random points in set
$A$. We introduce the Poisson point processes below, and sometimes it
is called the Poisson random measure (\cite{Sato13},
Kingman \cite{Kingman93}).\vspace*{-2pt}
\begin{definition}
\label{def-pps}
Let $(E, \mathcal{A}, \mu )$ be a measurable space with $\sigma $-finite
measure $\mu $, and $\mathbb{N}$ be the non-negative integer set. The
Poisson random measure with intensity $\mu $ is a family of random
variables $\{N(A,\omega )\}_{{A\in {\mathcal{A}}}}$ (defined on some
probability space $(\varOmega , \mathcal{F}, P)$) as the product map\vspace*{-2pt}
\begin{equation*}
N : \mathcal{A} \times \varOmega \rightarrow \mathbb{N}
\end{equation*}
satisfying
\begin{enumerate}[3.]
\item[1.]
$\forall A\in \mathcal{A}$, the $N(A, \cdot )$ is Poisson random
variable on $(\varOmega , \mathcal{F}, P)$ with mean $\mu (A)$;
\item[2.]
$\forall \omega \in \varOmega $, the $N( \cdot ,\omega )$ is a counting
measure on $(E, \mathcal{A})$;
\item[3.]
If sets $A_{1},A_{2},\ldots ,A_{n}\in \mathcal{A}$ are disjoint, then
$N({A_{1}}, \cdot ), N({A_{2}}, \cdot ), \ldots , N({A_{n}}, \cdot )$ are
mutually independent.
\end{enumerate}
\end{definition}

In order to define the discrete compound Poisson point process, we shall repeatedly use the concept of Poisson random measure. Let $A$ be any measurable set on a measurable space $(E, \mathcal{A})$. A good example is $E=\mathbb{R}^d$. In the following section, we let $E=\mathbb{R}^d$ and denote $N(A,\omega)$ as the number of random points in set $A$. We introduce the Poisson point processes below, and sometimes it is called the Poisson random measure~(\cite{Sato13}, \cite{Kingman93}).
\begin{definition} \label{def-pps}
Let $(E, \mathcal A, \mu)$ be a measurable space with $\sigma$-finite measure $\mu$, and $\mathbb{N}$ be the non-negative integer set. The Poisson random measure with intensity $\mu$ is a family of random variables $\{N(A,\omega )\}_{{A\in {\mathcal  {A}}}}$ (defined on some probability space $(\Omega, \mathcal{F}, P)$) as the product map
$$
N :  \mathcal{A} \times \Omega  \rightarrow \mathbb{N}
$$
satisfying
\begin{enumerate}
\item
$\forall A\in\mathcal{A}$, the $N(A, \cdot )$ is Poisson random variable on $(\Omega, \mathcal{F}, P)$ with mean $\mu(A)$;
\item
$\forall \omega\in\Omega$, the $N( \cdot ,\omega )$ is a counting measure on $(E, \mathcal A)$;
\item
If sets $A_1,A_2,\ldots,A_n\in\mathcal{A}$ are disjoint, then $N({A_1}, \cdot ),N({A_2}, \cdot ), \cdots ,N({A_n}, \cdot )$ are mutually independent.
\end{enumerate}
\end{definition}

Let $N(\mathcal{A}):=N(\mathcal{A},\omega )$. Based on the Poisson
random measure, we define the discrete compound Poisson point processes
(DCPP) $\{CP( A)\}_{{A\in {\mathcal{A}}}}$ by the L\'{e}vy--Ito
decomposition\vspace*{-2pt}
\begin{equation*}
CP(A) := \sum_{k = 1}^{\infty }{k{N_{k}}(A)},
\end{equation*}
where the ${N_{k}}(A)$ are independent Poisson point process with
mean measure $\mu _{k} (A): = {\alpha _{k}}\*\int _{A} {\lambda (x)} \,dx$.

The m.g.f. of $\{CP(A)\}_{{A\in {\mathcal{A}}}}$ is ${M_{CP(A)}}(
\theta ) = \exp \{ \sum_{k = 1}^{\infty }{{\alpha _{k}}\int _{A}
{\lambda (x)} \,dx({e^{ - k\theta }} - 1)} \}$. We can see that
$\lambda (\cdot )$ is the intensity function of the \emph{generating
Poisson point processes} $\{{N_{k}}(A)\}_{{A\in {\mathcal{A}}}}$ for
each $k$. \cite{Aczel52} derives the p.m.f. and he calls it the
inhomogeneous composed Poisson distribution as $d=1$. If the intensity function
${\lambda (x)}$ is the constant (i.e. the mean measure is a multiple of Lebesgue measure), the $N$ is said to be a homogeneous Poisson point process.

Define the probability $P_{k}(A)$ by $P(N(A)=k)$. If $N(A)$ follows from
a DCP point process with intensity measure $\lambda (A)$, then the
probability of having $k$ ($k \geq 0$) points in the set $A$ is given by \vspace*{-2pt}
\begin{equation}
\label{eq:xu1} P_{k}(A)=\sum_{R(s,k)=k}
\frac{\alpha _{1}^{s_{1}}\cdots \alpha
_{k}^{s_{k}}}{s_{1}!\cdots s_{k}!} \bigl[\lambda (A) \bigr]^{s_{1}+\cdots +s_{k}}e
^{-\lambda (A)},
\end{equation}
where $R(s,k):=\sum_{t=1}^{k} ts_{t}$; see \cite{Aczel52} for the proof.\vadjust{\goodbreak}

\subsection{A new characterisation of DCP point process}\label{sec2.2}

Based on preliminaries, we give a new characterization of DCP point
process, which is an extension of \cite{Copeland36}. A similar
characterization for DCP distribution and process (not for point process
in terms of random measure) is derived by \cite{Wang1993}. For
more characterizations of the point process, see the monograph
\cite{Last17} and the references therein.

\begin{theorem}[Characterization for DCP point process]
\label{thm:ccdp}%
Consider the following
assumptions:
\begin{enumerate}[5.]
\item[1.] $P_{k}(A)>0$ if $0<m(A)<\infty $, where $m(\cdot )$ is the
Lebesgue measure;

\item[2.] $\sum_{k=0}^{\infty }P_{k}(A)=1$;

\item[3.] $P_{k}(A_{1}\cup A_{2})=\sum_{i=0}^{k} P_{k-i}(A
_{1}) P_{i}(A_{2})$ if $A_{1}\cap A_{2}=\emptyset $;

\item[4.] Let $S_{k}(A)=\sum_{i=k}^{\infty }P_{i}(A)$, then
$\lim_{m(A)\to 0} S_{1}(A)=0$;

\item[5.] $\lim_{m(A)\to 0}\frac{P_{k}(A)}{S_{1}(A)}=\alpha
_{k}$, where $\sum_{k=1}^{\infty }\alpha _{k}=1$.
\end{enumerate}
If $P_{k}(A)$ satisfies the assumptions 1--5, then there exists
a measure $\nu (A)$ (countable additivity, absolutely continuous) such
that $P_{k}(A)$ is represented by the equation (\ref{eq:xu1}) for all
$k$ and $A$.
\end{theorem}

For $A$ being the closed interval in $\mathbb{R}$, this setting turns
to the famous characterization of the Poisson process; see p.~447 of
\cite{Feller68} (The postulates for the Poisson process). The proof of
Theorem~\ref{thm:ccdp} is given in the \hyperref[sec6]{Appendix}. The proof consists of
showing countably additive and absolutely continuous of the intensity
measure and dealing with the matrix differential equation.

As a random measure, the DCPP is a mathematical extension from discrete
compound Poisson distribution and discrete compound Poisson process
index by real line. It has been studied by \cite{Baraud2009} that
they use histogram type estimator to estimate the intensity of the
Poisson random measure. A recent application, \cite{Das2018}
proposes a nonparametric Bayesian approach to RNA-seq count data by DCP
process. For a set of regions $\{A \in {\mathcal{A}}\}$ of interest on
the genome, such as genes, exons, or junctions, the number of reads is
counted, and the count in the region $A_{i}$ is viewed as a DCP process
for gene expression. The $P_{k}(A)$ satisfies assumptions 1--5 in
Theorem~\ref{thm:ccdp} by the real-world setting (the process of generating the
expression level for each gene in the experiment).

\section{Concentration inequalities for discrete compound Poisson point process}\label{sec3}
This section is about the construction of concentration inequalities for
DCPP. It has applications in high-dimensional $\ell _{1}$-regularized
negative binomial regression; see Sect.~\ref{sec4}.

As an appetizing example, if the DCPP has finite parameters, the
existing result such as Lemma~3 of \cite{Baraud2009} can be used
to obtain the concentration inequality for the sum of DCP random variables.

\begin{lemma}[\cite{Baraud2009}]
\label{lm:Baraud}
Let $Y_{1},\ldots ,Y_{n}$ be $n$ independent
centered random variables, and $\tau $, $\kappa $, $\{ {\eta _{i}}\}
_{i = 1}^{n}$ be some positive constants.
\begin{enumerate}[(b)]
\item[(a)] If $\log (\mathrm{{E}}e^{zY_{i}})\leq \kappa \frac{z^{2}\eta
_{i}}{2(1-z\tau )} $ for all $z\in [0,1/\tau [$, and $1\leq i\leq n$,
then
\begin{equation*}
P \Biggl[\sum_{i=1}^{n}
Y_{i} \geq \Biggl(2\kappa x\sum_{i=1}^{n}
\eta _{i} \Biggr)^{1/2} +\tau x \Biggr]\leq
e^{-x}\quad \textit{for all } x>0.
\end{equation*}
\item[(b)] If for $1\leq i\leq n$ and all $z>0$ $\log (\mathrm{{E}}e^{-zY
_{i}})\leq \kappa z^{2}\eta _{i}/2$, then
\begin{equation*}
P \Biggl[\sum_{i=1}^{n}
Y_{i} \leq - \Biggl(2\kappa x\sum_{i=1}^{n}
\eta _{i} \Biggr)^{1/2} \Biggr]\leq e^{-x}
\quad \textit{for all } x>0.
\end{equation*}
\end{enumerate}
\end{lemma}

Actually, the moment conditions in Lemma~\ref{lm:Baraud} is a direct
application of the existing result based on Cramer-type conditions
(Lemma~2.2 in \cite{Petrov1995}, or sub-exponential condition
\eqref{eq:sube} discussed below) for the convolutions' Poisson
distribution with different rates. The centered random variable
$Y_{i}$ with the moment condition $\log (\mathrm{{E}}e^{zY})\leq
\kappa \frac{z^{2}\eta _{i}}{2(1-z\tau )} $ in Lemma~\ref{lm:Baraud}(a),
is called the \emph{sub-Gamma} random variables (see Sect.~2.4 in
\cite{Boucheron13}). A random variable $X$ with zero mean is
\emph{sub-exponential} (see Sect.~2.1.3 in
\cite{Wainwright19}) if there are non-negative parameters $(v, \alpha )$ such that

\begin{equation}
\label{eq:sube} {\mathrm{{E}}}e^{\lambda X} \leq e^{\frac{v^{2}\lambda ^{2}}{2}} \quad
\text{for all } |\lambda| <\frac{1}{\alpha }.
\end{equation}
Note that the sub-exponential implies the sub-gamma condition by the
following inequality:
\begin{center}
$\log (\mathrm{{E}}{e^{ - z{Y_{i}}}}) \le \frac{
{\kappa {z^{2}}{\eta _{i}}}}{2} \le \frac{{\kappa {z^{2}}{\eta _{i}}}}{
{2(1 - z\tau )}}$ for all $z\in [0,1/\tau [$.
\end{center}
\begin{proposition}
\label{thm:cdp}
Let ${Y_{i}} \sim \operatorname{DCP}({\alpha _{1}}(i)\lambda (i), \ldots
,{\alpha _{r}}(i)\lambda (i))$ for $i = 1,2, \ldots ,n$, and
${\sigma _{i}} := \operatorname{Var}{Y_{i}} = \lambda (i)\sum_{k = 1}
^{r} {{k^{2}}{\alpha _{k}}(i)} $, then for all $x > 0$
\begin{gather*}
P \Biggl[ {\sum_{i = 1}^{n}
{({Y_{i}}} - \mathrm{{E}} {Y_{i}}) \ge \sqrt{2x\sum
_{i = 1}^{n} {\sigma _{i}^{2}}
} + rx} \Biggr] \le {e^{ - x}},~P \Biggl[ {\sum
_{i = 1}^{n} {({Y_{i}}} - \mathrm{
{E}} {Y_{i}}) \le - \sqrt{2x\sum_{i = 1}^{n}
{\sigma _{i}^{2}} } } \Biggr] \le {e^{ - x}}.
\end{gather*}
Moreover, we have
\begin{equation} \label{eq:ci1}
P\left[ {\left| {\sum\limits_{i = 1}^n {({Y_i} - {\rm{E}}{Y_i})} } \right| \ge \sqrt {2x\sum\limits_{i = 1}^n {\sigma _i^2} }  + rx} \right] \le 2{e^{ - x}}.
\end{equation}
\end{proposition}

\begin{proof}
First, in order to apply the above lemma, we need to evaluate the
log-moment-generating function of centered DCP random variables. Let
${\mu _{i}} =: \mathrm{{E}}{Y_{i}} = \lambda (i)\sum_{k = 1}
^{r} {k{\alpha _{k}}(i)}$, we have
\begin{align*}
\log {\mathrm{{E}}} {e^{z({Y_{t}} - {\mu _{i}})}} &= - z{\mu _{i}} + \log
\mathrm{E} {e^{z{Y_{i}}}} = - z{\mu _{i}} + \log
{e^{\lambda (i)\sum _{k = 1}^{r} {{\alpha _{k}}(i)} ({e^{kz}}
- 1)}}
\\
&= \lambda (i)\sum_{k = 1}^{r}
{{\alpha _{k}}(i) \bigl({e^{kz}} - kz - 1 \bigr)} .
\end{align*}
Using the inequality $e^{z}-z-1 \leq \frac{ z^{2}}{ 2(1-z)}$ for
$1>z>0$, one derives
\begin{equation*}
\log {\mathrm{{E}}} {e^{z({Y_{t}} - {\mu _{i}})}} \le \lambda (i) \sum
_{k = 1}^{r} {{\alpha _{k}}(i)
\frac{{{k^{2}}{z^{2}}}}{{2(1 -
kz)}}} \le \lambda (i)\sum_{k = 1}^{r}
{{\alpha _{k}}(i)\frac{
{{k^{2}}{z^{2}}}}{{2(1 - rz)}}} = :\frac{{{\sigma _{i}}{z^{2}}}}{{2(1 -
rz)}}
\quad (z > 0),
\end{equation*}
where ${\sigma _{i}} = \operatorname{Var}Y_{i} = \lambda (i)\sum_{k = 1}^{r} {{k^{2}}{\alpha _{k}}(i)} $. And by applying the
inequality $e^{z}-z-1 \leq \frac{z^{2}}{2}$ for $z<0$, we get
%
%e6 #&#
\begin{equation}
\label{eq:less0} \log {\mathrm{{E}}} {e^{z({Y_{t}} - {\mu _{i}})}} \le \lambda (i) \sum
_{k = 1}^{r} {{\alpha
_{k}}(i)\frac{{{k^{2}}{z^{2}}}}{2}} = :\frac{{{\sigma _{i}}{z^{2}}}}{2} \quad (z <
0).
\end{equation}
Thus (\ref{eq:less0}) implies $\log {\mathrm{{E}}}{e^{ - z({Y_{t}} -
{\mu _{i}})}} \le \frac{{{\sigma _{i}}{z^{2}}}}{2}$ for $z > 0$.

Therefore, by letting $\tau =1$, $\kappa = 1$, ${\eta _{i}} = \sigma _{i}^{2}$
in Lemma~\ref{lm:Baraud}, we have (\ref{eq:ci1}).
\end{proof}

In the sequel, without employing Cramer-type conditions, we will derive
a Bernstein-type concentration inequality for DCPP with
infinite-dimensional parameters $r =  + \infty $. Theorem~\ref{thm:ci} below is a weighted-sum version that is not the same as Proposition~\ref{thm:cdp}.

\begin{theorem}
\label{thm:ci}
Let $f(x)$ be the measurable function such that $\int _{E} {{f^{2}}(x)}
\lambda (x)\,dx < \infty $, where the $\lambda (x)$ is the intensity
function of the DCPP with parameter $({\alpha _{1}}\int _{E}
{\lambda (x)\,dx}, {\alpha _{2}}\int _{E} {\lambda (x)\,dx} , \ldots )$ under
the condition $\sum_{k = 1}^{\infty }{k{\alpha _{k}}}< \infty $.
Then the concentration inequality for a stochastic integral of DCPP is
\begin{align}
&P \Biggl( {
\int _{E} {f(x)} \Biggl[CP(dx) - \Biggl(\sum
_{k = 1}^{\infty } {k{\alpha _{k}}}
\Biggr) \lambda (x)\,dx \Biggr] \ge \Biggl(\sum_{k = 1}^{\infty }
{k{\alpha _{k}}} \Biggr) \biggl[\sqrt{2y{V_{f}}} +
\frac{y}{3}{{ \| f \|}_{\infty }} \biggr]} \Biggr) \le
{e^{ - y}}\quad (y>0),\label{eq:ie0}
\end{align}
where ${V_{f}} = \int _{E} {{f^{2}}(x)\lambda (x)\,dx}$, ${\| f \|}_{\infty }$ is the supremum of $f(x)$ on $E$.
\end{theorem}

Let $f(x) = {1_{A}}(x)$ in Theorem~\ref{thm:ci}, and we have $P  ( {Y - \mathrm{
{E}}Y \ge \frac{{\mathrm{{E}}Y}}{\lambda }[\sqrt{2y\lambda } +
\frac{y}{3}]}   ) \le {e^{ - y}}$ for $Y \sim \operatorname{DCP}(
{\alpha _{1}}\lambda ,{\alpha _{2}}\lambda , \ldots )$.

From the guide of the proof in Corollary~\ref{thm:ci} below, it is not hard to get
the Theorem~\ref{col:ce} by a linear combination of random variables which
is more important in applications in Sect.~\ref{sec4}. The proof of Theorem~\ref{col:ce} is given in the \hyperref[sec6]{Appendix}. This is essentially from the fact that the infinitely divisible distributions are closed under
the convolutions and scalings.
Corollary~\ref{col:ce} is the version for the sum of $n$ independent random
measures.

\begin{corollary}
\label{col:ce}
For a given set $\{S_{i}\}_{i=1}^n$ in $E$, if we have $n$ independent DCPP $\{ C{P_{i}}(S_{i})\} _{i = 1}^{n}$ with a series of parameters
\begin{equation*}
\{ ({\alpha _{1}}(i)
\smallint{}_{S_{i}} {\lambda (x)\,dx} ,{\alpha _{2}}(i)
\smallint{}_{S_{i}} {\lambda (x)\,dx} , \ldots ) \}
_{i = 1}^{n}
\end{equation*}
correspondingly, then, for a series of measurable function $\{f_i(x)\}_{i=1}^n$, we have
\begin{align}
&P \Biggl( {\sum_{i = 1}^{n}
{ \biggl\{ {
\int _{{S_{i}}} {f_i(x)} \bigl[C {P_{i}}(dx) - {\mu
_{i}}\lambda (x)\,dx \bigr]} \biggr\} } \ge \sum
_{i = 1}^{n} {{\mu _{i}}} \biggl(
\sqrt{2y{V_{i,f_i}}} + \frac{y}{3} {{ \| f_i \|
}_{\infty }} \biggr)} \Biggr)\le {e^{ - ny}}\quad (y>0),\label{eq:ie0A}
\end{align}
where ${\mu _{i}} := \sum_{k = 1}^{\infty }{k{\alpha _{k}}} (i)<
\infty $ and ${V_{i,f_i}} := \int _{{S_{i}}} {{f_i^{2}}(x)} \lambda (x)\,dx<
\infty $.

Moreover, let ${c_{1n}} := \sum_{i = 1}^{n} {{\mu _{i}}} \sqrt{2
{V_{i,f_i}}} $, ${c_{2n}} :=
\sum_{i = 1}^{n} {\frac{{{\mu _{i}}}}{3}}{ \|f_i \|_{\infty }}$, then
\begin{align}
 &P \Biggl( {\sum_{i = 1}^{n}
{ \biggl\{ {
\int _{{S_{i}}} {f_i(x)} \bigl[C {P_{i}}(dx) - {\mu
_{i}}\lambda (x)\,dx \bigr]} \biggr\} } \ge t} \Biggr) \le \exp
\biggl\{ { - n{{ \biggl( {\sqrt{\frac{t}{{{c_{2n}}}} + \frac{
{c_{1n}^{2}}}{{4c_{2n}^{2}}}} -
\frac{{{c_{1n}}}}{{2{c_{2n}}}}} \biggr)} ^{2}}} \biggr\} .\label{eq:f2}
\end{align}
\end{corollary}

The difference between Proposition~\ref{thm:cdp} and
Corollary~\ref{col:ce} is that the DCP random variables in
Proposition~\ref{thm:cdp} have order $r <  + \infty $ while
Corollary~\ref{col:ce} is related to DCP point process (a particular case
of DCPP is the constant intensity $\lambda (A) = \lambda $) with order
$r =  + \infty $. Moreover, Proposition~\ref{thm:cdp} require dependent variance factors $\sigma _{i}^{2}$ for DCP random variables.
%r1 #&#
\begin{remark}
Significantly, Corollary~\ref{col:ce}
is the weighted-sum version, and it improves the results in \cite{Cleynen14} who deals with the un-weighted sum of NB random
variables (as a special case of DCP distribution). Moreover, \cite{Cleynen14} result relies on Lemma~\ref{lm:Baraud}, which depends
on the sub-Gamma condition. It should be noted that we do not impose any
bounded restriction for $y$ in our concentration inequalities
\eqref{eq:ie0} and \eqref{eq:ie0A}, while Theorem~1 of \cite{Houdre02a} and concentration inequalities of \cite{Houdre02} require that the infinitely divisible random vector
satisfies the Cramer-type conditions and the $y$ in $P(X-EX \geq y)$ is
bounded by a given constant. The moment conditions in Lemma~\ref{lm:Baraud} sometimes are difficult to check, being hard to apply
in the Lasso KKT condition in Sect.~\ref{sec4.1} if the count data are other
infinitely divisible discrete distributions. In our Theorem~\ref{thm:ci} and
Corollary~\ref{col:ce}, we only need to check that the means of the DCP
processes are finite and Theorem~\ref{thm:ci} and Corollary~\ref{col:ce} do not
need the Bernstein-type condition (see Theorem~2.8 in
\cite{Petrov1995} and p.~27 of \cite{Wainwright19}). In addition, our concentration result does not require the higher moment condition proposed by \cite{Kontoyiannis06}: ${\rm E }Z^L = \sum\limits_{k = 1}^\infty  {k^L{\alpha _k}} <\infty~(L>1)$ about the compound distribution $P(Z = i) :=\alpha _{i}$ for the given discrete random variable $Z$.
\end{remark}

Before we give the proof of Corollary~\ref{thm:ci}, we need some notations and a lemma. For a general point process $N(A)$ defined on $\mathbb{R}^{d}$, let
$f$ be any non-negative measurable function on $E$. The Laplace
functional is defined by ${\mathrm{{L}}_{N}}(f) = \mathrm{{E}}[\exp
\{ - \int _{E} f (x)N(dx)\} ]$, it is the stochastic integral is
specified by $\int _{E} f (x)N(dx) = :\sum_{{x_{i}} \in \varPi
\cap E} f ({x_{i}})$, where the $\varPi $ is the state space of the point
process $N(A)$. The Laplace functional for a random measure is a crucial
numerical characteristics that enables us to handle stochastic integral
of the point process. Moreover, we rely on the following theorem due to
Campbell.
\begin{lemma}[See Sect.~3.3 in \cite{Kingman93}]
Let $N(A)$ be a Poisson random measure with intensity $\lambda (t)$ and
a measurable function $f: \mathbb{R}^{d}\rightarrow \mathbb{R}$, the
random sum $S =\sum_{x\in {\varPi }}f(x)$ is absolutely convergent with
probability one if and only if the integral $\int _{\mathbb{R}^{d}}
\min (|f(x)|,1)\lambda (x)\,dx < \infty $. Assume this condition holds,
thus we assert that, for any complex number value $\theta $, the
equation
\begin{equation*}
{\mathrm{{E}}} e^{{\theta S}} = \exp \biggl\{
\int _{{\mathbb{R}^{d}}} { \bigl[ {e^{\theta f(x)}}} - 1 \bigr]\lambda
(x)\,dx \biggr\}
\end{equation*}
holds if the integral on the right-hand side is convergent.
\end{lemma}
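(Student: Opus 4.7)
The plan is to establish Campbell's formula by the classical measure‑theoretic bootstrap---indicator functions, then simple functions, then general measurable $f$ via monotone and dominated convergence, then complex $\theta$ by analytic continuation---and to derive the absolute‑convergence characterisation as a corollary of the first‑moment form of the identity.

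For the exponential identity itself, I would first take $f = \mathbf{1}_A$ with $\Lambda(A) := \int_A \lambda(x)\,dx < \infty$. Then $S = N(A)$ is Poisson with parameter $\Lambda(A)$ by the second condition of Definition~\ref{def-pps}, and a direct generating‑function computation gives $\mathbb{E} e^{\theta S} = e^{\Lambda(A)(e^{\theta} - 1)} = \exp\{\int (e^{\theta \mathbf{1}_A(x)} - 1)\lambda(x)\,dx\}$. For a simple function $f = \sum_{i=1}^m c_i \mathbf{1}_{A_i}$ with pairwise disjoint $A_i$ of finite $\Lambda$‑measure, the independence clause of Definition~\ref{def-pps} makes the counts $N(A_i)$ independent, so the Laplace functional factorises and the identity is closed under such finite linear combinations. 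For non‑negative measurable $f$, approximate by simple functions $f_n \uparrow f$; in the Laplace regime $\theta \le 0$ the variables $e^{\theta S_n}$ are dominated by $1$ and converge pointwise to $e^{\theta S}$, so dominated convergence handles the left‑hand limit, while monotone convergence applied to $1 - e^{\theta f_n} \ge 0$ handles the right‑hand limit. The extension to general real $f$ is by the decomposition $f = f^+ - f^-$, and the extension to complex $\theta$ in an open region where the right‑hand integral converges is by analytic continuation: both sides are holomorphic on that region and agree on a real interval, hence agree throughout.

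For the absolute‑convergence characterisation I would split $f = f\,\mathbf{1}_{|f|\le 1} + f\,\mathbf{1}_{|f|>1}$. The ``large'' piece contributes a sum indexed by $N(\{|f|>1\})$, a Poisson count with mean $\int \mathbf{1}_{|f|>1}\lambda\,dx$; this count is a.s.\ finite iff that integral is finite. The ``small'' piece is a non‑negative random sum with first moment $\int_{|f|\le 1}|f|\lambda\,dx$, obtained either by differentiating the Laplace functional at $\theta = 0$ or by applying it with $f$ replaced by $t|f|\mathbf{1}_{|f|\le 1}$ and letting $t\downarrow 0$; finiteness of this integral gives a.s.\ absolute summability, whereas infiniteness forces a.s.\ divergence via a variance argument based on the second‑moment formula applied to bounded truncations $|f|\wedge c$.

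The main obstacle is the passage from simple to general $f$ with a \emph{complex} parameter $\theta$: the monotone/dominated convergence step lives most naturally in the Laplace regime $\mathrm{Re}\,\theta \le 0$, so pushing the identity onto the full holomorphic strip where the right‑hand integral converges requires an analytic‑continuation argument, which in turn requires verifying that $\theta \mapsto \mathbb{E} e^{\theta S}$ is holomorphic on that strip (typically by a Morera/Fubini argument applied to the truncated sums $S_n$). The converse half of the absolute‑convergence statement is the second delicate point, because one must convert divergence of a deterministic $\lambda$‑integral into a.s.\ divergence of a random sum; the cleanest route is the truncation‑and‑second‑moment argument sketched above, carried out within the regime where the identity is already established.
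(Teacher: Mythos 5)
Your proposal is sound and is essentially the classical proof of Campbell's theorem (indicator functions, then simple functions via independence of counts on disjoint sets, then monotone/dominated convergence in the Laplace regime, then analytic continuation in $\theta$), which is precisely the argument in Kingman (1993, Section~3.3) that the paper cites; the paper itself offers no proof of this lemma beyond that citation, so your reconstruction matches the intended route. The one place where you deviate in substance is the converse half of the absolute-convergence claim: the cleaner standard route applies the already-established identity to $-t|f|$ and uses $1-e^{-t|f|}\ge(1-e^{-t})\min(|f|,1)$, so that divergence of $\int\min(|f(x)|,1)\lambda(x)\,dx$ forces $\mathbb{E}\,e^{-t\sum_{x\in N}|f(x)|}=0$ and hence $\sum_{x\in N}|f(x)|=+\infty$ almost surely, whereas your truncation-and-variance argument also works but additionally requires an exhaustion of $\mathbb{R}^d$ by sets of finite intensity measure so that the Chebyshev bound actually tends to zero.
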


\begin{proof}
Now, we can easily compute the DCPP's Laplace functional by Campbell's theorem. Set $C{P_r}(A) =: \sum\limits_{k = 1}^r {k{N_k}(A)}$. And let $\theta=-1$, that is ${{\rm{L}}_N}(f) = \exp \{  \int_E {[{e^{-f(x)}}}  - 1]\lambda (x)dx\} $.  By independence, it follows that
\begin{align*}
{{\rm{L}}_{C{P_r}}}(f) & ={\rm{E}} \exp \{  - \int_E {f(x)} C{P_r}(dx)\} ={\rm{E}} \exp \{  - \int_E {f(x)} \sum\limits_{k = 1}^r  {k{N_k}(dx)} \}  = \prod\limits_{k = 1}^r {\rm{E}} {\exp \{  - \int_E {kf(x)} {N_k}(dx)\} } \\
& = \prod\limits_{k = 1}^r  {\exp \{ \int_E {[{e^{-kf(x)}}}  - 1]{\alpha _k}\lambda (x)dx\}  = } \exp \{  \sum\limits_{k = 1}^r  {\int_E {[{e^{-kf(x)}}}  - 1]{\alpha _k}\lambda (x)dx} \}
\end{align*}
The above Laplace functional of compound random measure seems like L{\'e}vy-Khintchine representation. Define for $\eta>0$
\begin{equation} \label{eq:ie1}
D_r(\eta ) := \exp \left\{ {\{ \int_E {\eta f(x)} [C{P_r}(dx) - (\sum\limits_{k = 1}^r  {k{\alpha _k}} )\lambda (x)]dx\}  - \int_E {\sum\limits_{k = 1}^r  {[{e^{kf(x)}} - k\eta f(x) - 1]{\alpha _k}\lambda (x)} } dx} \right\}.
\end{equation}
Therefore, we obtain ${\rm{E}} D_r(\eta ) = 1$.

It follows from (\ref{eq:ie1}) and Markov's inequality, we have
\begin{align} \label{eq:ie2a}
&~~~~P\left( {\int_E {\eta f(x)} [C{P_r}(dx) - (\sum\limits_{k = 1}^r  {k{\alpha _k}} )\lambda (x)dx] \ge \int_E {\sum\limits_{k = 1}^r  {[{e^{kf(x)}} - k\eta f(x) - 1]{\alpha _k}\lambda (x)} } dx + y} \right)\nonumber\\
& = P(D(\eta ) \ge {e^{ y}}) \le {e^{ - y}}.
 \end{align}
Note that
\begin{align} \label{eq:111}
{e^{k\eta f(x)}} - k\eta f(x) - 1 \le \sum\limits_{i = 2}^\infty  {\frac{{{\eta ^i}}}{{i!}}} \left\| {kf} \right\|_\infty ^{i - 2}{k^2}{f^2}(x) & = {k^2}{f^2}(x)\sum\limits_{i = 2}^\infty  {\frac{{{\eta ^i}}}{{i(i - 1) \cdots 2}}} \left\| {kf} \right\|_\infty ^{i - 2}\nonumber\\
& \le \frac{{{k^2}{\eta ^2}{f^2}(x)}}{2}\sum\limits_{i = 2}^\infty  {{{\left( {\frac{1}{3}k\eta {{\left\| f \right\|}_\infty }} \right)}^{i - 2}}}\nonumber \\
& \le {{\frac{{{k^2}{\eta ^2}{f^2}(x)}}{2}} \mathord{\left/
 {\vphantom {{\frac{{{k^2}{\eta ^2}{f^2}(x)}}{2}} {(1 - \frac{1}{3}k\eta {{\left\| f \right\|}_\infty })}}} \right.
 \kern-\nulldelimiterspace} {(1 - \frac{1}{3}k\eta {{\left\| f \right\|}_\infty })}}.
\end{align}
Hence, using \eqref{eq:111},  (\ref{eq:ie2a}) turns to
\begin{equation} \label{eq:222}
P\left( {\int_E {f(x)} [C{P_r}(dx) - (\sum\limits_{k = 1}^r {k{\alpha _k}} )\lambda (x)dx] \ge \int_E {\sum\limits_{k = 1}^r {{\alpha _k}\left( {\frac{1}{2}\frac{{k^2 \eta {f^2}(x)\lambda (x)}}{{1 - {\textstyle{1 \over 3}}k\eta {{\left\| f \right\|}_\infty }}}dx + \frac{y}{\eta }} \right)} }  } \right) \le {e^{ - y}}.
\end{equation}
Let ${{V_f} = \int_E {{f^2}(x)\lambda (x)dx} }$, notice that
\begin{equation} \label{eq:333}
\begin{split}
\sum\limits_{k = 1}^r {{\alpha _k}\left( {\frac{1}{2}\frac{{\eta {k^2}{V_f}}}{{1 - {\textstyle{1 \over 3}}k\eta {{\left\| f \right\|}_\infty }}} + \frac{y}{\eta }} \right)} & = \sum\limits_{k = 1}^r {{\alpha _k}\left( {\frac{1}{2}\frac{{\eta {k^2}{V_f}}}{{1 - {\textstyle{1 \over 3}}k\eta {{\left\| f \right\|}_\infty }}} + \frac{y}{\eta }(1 - \frac{1}{3}k\eta {{\left\| f \right\|}_\infty }) + \frac{{ky}}{3}{{\left\| f \right\|}_\infty }} \right)} \\
& \ge \sum\limits_{k = 1}^r {{\alpha _k}\left( {k\sqrt {{V_f}y}  + \frac{{ky}}{3}{{\left\| f \right\|}_\infty }} \right)}  = (\sum\limits_{k = 1}^r {k{\alpha _k}} )[\sqrt {{V_f}y}  + \frac{y}{3}{\left\| f \right\|_\infty }].
\end{split}
\end{equation}
Applying \eqref{eq:333}, so by minimizing $\eta $ in \eqref{eq:222}, we have
\begin{equation} \label{eq:ie22}
P\left( {\int_E {f(x)} [C{P_r}(dx) - (\sum\limits_{k = 1}^r {k{\alpha _k}} )\lambda (x)dx] \ge (\sum\limits_{k = 1}^r {k{\alpha _k}} )[\sqrt {2y{V_f}}  + \frac{y}{3}{{\left\| f \right\|}_\infty }]} \right) \le {e^{ - y}}.
\end{equation}
Letting $r \to \infty $ in (\ref{eq:ie22}), then $C{P_r}(A) \xrightarrow{d} CP(A)$, so we finally prove the inequality (\ref{eq:ie0}).
\end{proof}

\section{Application}\label{sec4}

\subsection{Application to KKT conditions}\label{sec4.1}

For negative binomial random variable ${NB}$, the probability mass
function (p.m.f.) is
\begin{equation*}
P({NB} = n) = \frac{{\varGamma (n + r)}}{{\varGamma (r)n!}}{(1 - q)^{s}} {q
^{n}}\quad \bigl(q \in (0,1),n \in \mathbb{N} \bigr),
\end{equation*}
where $s$ is a certain real value number. When $s$ is positive integer, the negative binomial distribution reduces to Pascal distribution which is modeled as the number of failures before the $s$-th success in repeated mutually independent Bernoulli trials (with probability of success $p=1-q$).

The m.g.f. of $NB$ is
\begin{equation}\label{eq:nbd1}
{\varphi _{\mathrm{NB}}}(t) = \exp \biggl\{ \log { \biggl( {
\frac{{1 - q}}{{1 - q{e
^{\mathrm{{i}}t}}}}} \biggr)^{s}} \biggr\} = \exp \Biggl\{ \sum
_{k = 1} ^{\infty }{ - s\log (1 - q)} \cdot
\frac{{{q^{k}}}}{{ - k\log (1 - q)}} \bigl({e^{\mathrm{{i}}kt}} - 1 \bigr) \Biggr\} .
\end{equation}
Here the parametrization of DCP is
\begin{center}
${NB} \sim \operatorname{DCP}({\alpha
_{1}}\lambda ,{\alpha _{2}}\lambda , \ldots )$ with $\lambda = - s
\log (1 - q)$, ${\alpha _{s}} = \frac{{{q^{s}}}}{{ - s\log (1 - q)}}$.
\end{center}

Given the data $\{Y_{i}, {x}_{i}\}_{i=1}^{n}$, the NB regression models
suppose that the responses $\{Y_{i}\}_{i=1}^{n}$ is NB distributed
random variables with mean $\{\mu _{i}\}_{i=1}^{n}$, and the vectors
covariates ${x}_{i}=(x_{i1},\ldots ,x_{ip})^{T} \in
\mathbb{R}^{p}$ are $p$ dimensional fixed (non-random) variable for simplicity; see the so-called NB2 model in \cite{Hilbe11}
for details. Based on the log-linear model in regression analysis, we
usually suppose that $\log \mu _{i} = {f_{0}}({{x}_{i}})$ is a linear
function of ${{x}_{i}}$.

Two types of high-dimensional setting for count data regression are
classified as follows.
\begin{itemize}%
\item
\textit{Sparse approximation for nonparametric regression.}

\noindent Sometimes, it is too rough and too simple to use a linear function to
approximate $\log \mu _{i}$ by a function of ${{x}_{i}}$, denoted as
${f_{0}}({{x}_{i}})$. The connection between $\log \mu _{i}$ and
${{x}_{i}}$ is sometimes unknown, thus it is unlikely to be linear. The
dimension of ${{x}_{i}}$ would be much larger than the sample size when
the $f$ is extremely flexible and complex. In order to capture the
unknown functional relation ${f_{0}}({{x}_{i}})$, we prefer to use a
given dictionary (orthogonal base functions) $\mathbb{D}=\{ {\phi _{j}}(
\cdot )\} _{j = 1}^{p}$ such that the linear combination $\hat{f}(
{{x}_{i}}) = \sum_{j = 1}^{p} {{{\hat{\beta }}_{j}}} {\phi _{j}}(
{x_{ij}})$ is a sparse approximation to ${f_{0}}({{x}_{i}})$ which
contains increasing-dimensional parameter ${\beta ^{*}}: = {(\beta _{1}
^{*}, \ldots ,\beta _{p}^{*})^{T}}$ with $p: = {p_{n}} \to \infty $ as
$n \to \infty $. A~crucial point for orthogonal base functions
$\mathbb{D}$ is that many ${f_{0}}({{x}_{i}})$, which have no sparse
representations in the non-orthogonal base, should be
represented as sparse linear combinations of $\mathbb{D}$ in the
orthogonal scenario; see Sect.~10 of \cite{Hastie15} for details.
In practice, the advantage of the dictionary approach is that the
flexible ${f_{0}}({{x}_{i}})$ may admit a good approximation sparse in
some dictionary functions while not in others. For high-dimensional
count data, \cite{Ivanoff16} mentioned that: ``The richer the
dictionary, the sparser the decomposition, so that $p$ can be larger than
$n$ and the model becomes high-dimensional''. Typical candidates of the
dictionary are Fourier basis, cosine basis, Legendry base, wavelet
basis (Haar basis), etc.
\item
\textit{High-dimensional features in gene engineering}.

\noindent In the big data era, one challenge case we encountered in massive data
sets is that the number of given covariates $p$ is larger than the
sample size $n$ and the responses of interest are measured as counts.
For example, in gene engineering, the covariates (features) are types
of genes impacting on specific count phenotypes. The NB regression is a
flexible framework for the analysis of over-dispersed counts RNA-seq
data; see \cite{Li16}, \cite{Mallick16} for more details.
\end{itemize}

We assume that the expectation of $Y_{i}$ will be related to
${\phi ^{T}({{x}_{i}})\boldsymbol{{\beta }}}$ after a log-transforma\-tion,
\begin{equation*}
{\mu _{i}} := \exp \Biggl\{ \sum_{j = 1}^{p}
{{\beta _{j}}} {\phi _{j}}( {x_{ij}})
\Biggr\} := {e^{\phi ^{T}({{x}_{i}})\boldsymbol{{\beta }}}}:= h({x_{i}}),
\end{equation*}
where $\phi ({{x}_{i}})=(\phi _{1}(x_{i1}),\ldots ,\phi _{p}(x_{ip}))^{T}$
with covariate $\{\phi _{j}(x_{ij})\}$ being the $j$th component of
$\phi ({{x}_{i}})$. Here $\{ {\phi _{j}}( \cdot )\} _{j = 1}^{p}$ are
given transformations of bounded covariances. A trivial example is
${\phi _{j}}(x) = x$.

Let $\{Y_{i}\}_{i=1}^{n}$ be a NB random variable with failure parameters
$\{{q_{i}} = \frac{{{\mu _{i}}}}{{\theta + {\mu _{i}}}}\}$ where
$\theta >0$ is an unknown dispersion parameter which can be estimated (see \cite{Hilbe11}). Then the p.m.f.s of $\{Y_{i}\}_{i=1}^{n}$ are
\begin{equation}
\label{eq:nbd} %
\begin{aligned}[b]
f({y_{i}};{\mu _{i}},
\theta ) &= \frac{{\varGamma (\theta + {y_{i}})}}{
{\varGamma (\theta ){y_{i}}!}}{ \biggl(\frac{{{\mu _{i}}}}{{\theta + {\mu _{i}}}}
\biggr)^{
{y_{i}}}} { \biggl(\frac{\theta }{{\theta + {\mu _{i}}}} \biggr)^{\theta }}
\\
&\propto \exp \bigl\{ {y_{i}}\phi ^{T}({{x}_{i}})
\boldsymbol{\beta } - (\theta + {y_{i}}) \log \bigl(\theta +
e^{\phi ^{T}({{x}_{i}})\boldsymbol{\beta }} \bigr) \bigr\},
\end{aligned}
\end{equation}
where $\mu _{i} >0$ is the parameter with $\mu _{i}=\mathrm{{E}}{Y_{i}}$
and $\operatorname{{Var}} Y_{i}=\mu _{i}+\frac{\mu _{i}^{2}}{\theta } > \mathrm{
{E}}{Y_{i}}$.

The log-likelihood of the predictor variable $\{Y_{i}\}_{i=1}^{n}$ is
%
%e18 #&#
\begin{equation}
\label{eq:llh} l(\boldsymbol{\beta } )=: \log \Biggl[ \prod
_{i=1}^{n}f \bigl(Y_{i};f_{0}({{x}_{i}}),
\theta \bigr) \Biggr]=\sum_{i=1}^{n}
\bigl[Y_{i}\phi ^{T}({{x}_{i}})\boldsymbol{
\beta } -( \theta +Y_{i})\log \bigl(\theta +e^{\phi ^{T}({{x}_{i}})\boldsymbol{\beta } }
\bigr) \bigr].
\end{equation}

Let $R(y,\beta ,x) = y{\phi ^{T}}(x)\beta - (\theta + y)\log (\theta +
{e^{{\phi ^{T}}(x)\beta }})$ denote the negative average empirical risk
function by $\ell (\boldsymbol{\beta })=- \frac{1}{n} l (\boldsymbol{\beta }):=
\mathbb{P}_{n} R(Y,\beta ,x)$.

We assume that the expectation of $Y_i$ will be related to ${\phi ^T({{x}_i}){\bf{\beta }}}$ after a log-transformation.
$${\mu _i} := \exp \{ \sum\limits_{j = 1}^p {{\beta _j}} {\phi _j}({x_{ij}})\} := {e^{\phi ^T({{x}_i}){\bf{\beta }}}}:= h({X_i}),$$
where $\phi({{x}_i})=(\phi_{1}(x_{i1}),\cdots,\phi_{p}(x_{ip}))^{T}$ with covariate  $\{\phi_{j}(x_{ij})\}$ being the $j$-th component of $\phi({{x}_i})$. Here $\{ {\phi _j}( \cdot )\} _{j = 1}^p$ are given transformations of bounded covariances. A trivial example is ${\phi _j}(x) = x$.

Let $\{Y_i\}_{i=1}^{n}$ be a NB random variable with parameters $\{{q_i} = \frac{{{\mu _i}}}{{\theta  + {\mu _i}}}\}_{i=1}^{n}$ where $\theta>0$ is an unknown dispersion parameter which can be estimated (see \cite{Hilbe11}). Then the p.m.f. of $\{Y_i\}_{i=1}^{n}$ are
\begin{equation}\label{eq:nbd}
\begin{aligned}
f({y_i};{\mu _i},\theta ) &= \frac{{\Gamma (\theta  + {y_i})}}{{\Gamma (\theta ){y_i}!}}{(\frac{{{\mu _i}}}{{\theta  + {\mu _i}}})^{{y_i}}}{(\frac{\theta }{{\theta  + {\mu _i}}})^\theta } \propto \exp \{ {y_i}\phi^{T}({{x}_i})\mathbf{\beta} - (\theta  + {y_i})\log (\theta  + e^{\phi^{T}({{x}_i})\mathbf{\beta}})\} \\
\end{aligned}
\end{equation}
where $\mu _i >0$ is the parameter with $\mu _i={\rm{E}}{Y_i}$ and ${\rm{Var}} Y_i=\mu_i+\frac{\mu_i^{2}}{\theta} > {\rm{E}}{Y_i}$.

The log-likelihood of predictor variable $\{Y_{i}\}_{i=1}^{n}$ is
\begin{equation}\label{eq:llh}
\begin{aligned}
l(\mathbf{\beta} )=:&\log[\prod_{i=1}^{n}f(Y_{i};f_{0}({{x}_i}),\theta)]=\sum_{i=1}^{n}[Y_{i}\phi^{T}({{x}_i})\mathbf{\beta} -(\theta+Y_{i})\log(\theta+e^{\phi^{T}({{x}_i})\mathbf{\beta} })]
\end{aligned}
\end{equation}

Let $R(y,\beta ,x) = y{\phi ^T}(x)\beta  - (\theta  + y)\log (\theta  + {e^{{\phi ^T}(x)\beta }})$, denote the negative average empirical risk function by $\ell (\mathbf{\beta})=- \frac{1}{n} l (\mathbf{\beta}):= \mathbb{P}_n R(Y,\beta ,x)$.

Having obtained high-dimensional covariates, a principal task is to
select important variables. Here, we prefer the weighted $\ell _{1}$-penalized likelihood principle for sparse NB regression,
\begin{equation}
\label{eq:adlasso} \hat{\boldsymbol{\beta }}=\argmin_{\boldsymbol{\beta } \in
\mathbb{R}^{p}} \Biggl
\lbrace \ell (\boldsymbol{ \beta } )+ \sum_{j=1}^{p}
w_{j} |{{\beta _{j}}}| \Biggr\rbrace,
\end{equation}
where $\{ {w_{j}}\} _{j = 1}^{p}$ are data-dependent weights which are
supposed to be specified in the following discussion.

By using KKT conditions (i.e. the first order condition in convex
optimization; see p.~68 of B\"{u}hlmann and van de Geer \cite{Buhlmann11}), the
$\hat{\boldsymbol{\beta }}$ is a solution of (\ref{eq:adlasso}) iff
$\hat{\boldsymbol{\beta }}$ satisfies first order conditions
\begin{equation}
%\label{eq:kkt}
\begin{cases}
- {\dot{\ell
}_{j}}(\hat{\boldsymbol{{\beta }}}) = {w_{j}}
\operatorname{{sign}}( {\hat{\beta }_{j}}) & \mbox{if }
{\hat{\beta } _{j}} \ne 0,
\\
|{\dot{\ell }_{j}}(\hat{\boldsymbol{{\beta }}})|\le {w _{j}} &\mbox{if } {\hat{\beta }_{j}}
= 0,
\end{cases} %
\end{equation}
where ${\dot{\ell }_{j}}(\hat{\boldsymbol{{\beta }}}) := \frac{{\partial
\ell (\hat{\boldsymbol{{\beta }}})}}{{\partial {{\hat{\beta }}_{j}}}}$ ($j =
1,2, \ldots ,p$).

Let $\varPhi ({X})$ be the $n \times p$ dictionary design matrix given by
\[\Phi ({X})  = :{\left( {{\phi _j}({x_{ij}})} \right)_{1 \le i \le n,1 \le j \le p}} = \left( {\begin{array}{*{20}{c}}
{{\phi _1}({x_{11}})}& \cdots &{{\phi _p}({x_{1p}})}\\
 \vdots &{}& \vdots \\
{{\phi _1}({x_{n1}})}& \cdots &{{\phi _p}({x_{np}})}
\end{array}} \right) = :{({\phi ^T}({{{x}}_1}), \cdots ,{\phi ^T}({{{x}}_n}))^T}.\]

The KKT conditions implies $|{\dot{\ell }_{j}}(\hat{\beta })| \le w
_{j}$ for all $j$. And
\begin{align*}
\dot{\ell }(\beta ) &:= \frac{{\partial \ell (\beta )}}{{\partial
\beta }} = -\frac{1}{n} \sum
_{i = 1}^{n} \phi ({{{x}_{i}}})
\biggl[ {Y_{i}} - (\theta + {Y_{i}})
\frac{{{e^{{\phi ^{T}}({{x}_{i}})\beta }}}}{
{\theta + {e^{{\phi ^{T}}({{x}_{i}})\beta }}}} \biggr]
\\
& = -\frac{1}{n} \sum_{i = 1}^{n}
{\frac{{\phi ({{x}_{i}})(
{Y_{i}} - {e^{{\phi ^{T}}({{x}_{i}})\beta }})\theta }}{{\theta + {e
^{{\phi ^{T}}({{x}_{i}})\beta }}}}} = -\frac{1}{n}\sum
_{i = 1} ^{n} {\frac{{\phi ({{x}_{i}})({Y_{i}} - \mathrm{{E}}{Y_{i}})\theta }}{
{\theta + \mathrm{{E}}{Y_{i}}}}}.
\end{align*}

The Hessian matrix of the log-likelihood function is
\begin{equation}
\label{eq:Hessian} \ddot{\ell }(\beta ):= \frac{{{\partial ^{2}}\ell (\beta )}}{{\partial
\beta \partial {\beta ^{T}}}} =
\frac{1}{n}\sum_{i = 1}^{n}
\phi ({{x}_{i}}){\phi ^{T}}({{x}_{i}})
\frac{{\theta (\theta + {Y_{i}})
{e^{{\phi ^{T}}({{x}_{i}})\beta }}}}{{{{(\theta + {e^{{\phi ^{T}}({{x}
_{i}})\beta }})}^{2}}}}.
\end{equation}

Let ${d^{*}} = |\{ j:\beta _{j}^{*} \ne 0\} |$; the true coefficient
vector ${{{\beta }} ^{*}}$ is defined by
\begin{equation}
\label{eq:oracle} {{{\beta }} ^{{{*}}}} = \argmin _{{\beta } \in {{\mathbb{R}}^{p}}} {
\mathrm{{E}}} R(Y,X,{{\beta }}).
\end{equation}
For the optimization problem of maximizing the $\ell _{1}$-penalized
empirical likelihood, our establishment is that KKT conditions are
satisfied for the estimated parameter. But here\vadjust{\goodbreak} we use the true
parameter version of the KKT conditions
\begin{equation}\label{eq:weights}
|{\dot \ell _j}({{\bf{\beta }} ^{\rm{*}}})| =\Biggl|\frac{1}{n}\sum\limits_{i = 1}^n {\frac{{{\phi _j} ({{x}_i})({Y_i} - {\rm{E}}{Y_i})\theta }}{{\theta  + {\rm{E}}{Y_i}}}}\Biggr| \le {w_j}
\end{equation}
to approximate estimated parameter version of KKT conditions.

When $\theta \to \infty$, this
approach of defining data-driven weights ${w_{j}}$ is proposed in \cite{Ivanoff16} for Poisson regression by using concentration inequality for Poisson point process version of Theorem~
\ref{thm:ci}. Similarly, in NB regression the ${w_{j}}$ are determined such that the event
\eqref{eq:weights} holds with high probability for all $j$ by applying Corollary~\ref{col:ce}.

Recall the weighted Poisson decomposition of the $NB$ point processes
\begin{equation*}
{NB}(A) := \sum_{k = 1}^{\infty }{k{N_{k}}(A)},
\end{equation*}
where ${N_{k}}(A)$ is an inhomogeneous Poisson point process with
intensity ${{\alpha _{k}}\int _{A} {\lambda (x)} \,dx}$ and ${\alpha _{k}}
= \frac{{{q^{k}}}}{{ - k\log (1 - q)}}$ defined in \eqref{eq:nbd1}.

Let $\mu (A): = \int _{A} {\lambda (x)} \,dx$, the m.g.f. of ${NB}(A)$ is
written as
\begin{equation}
\label{eq:nbp} {M_{\mathrm{NB}(A)}}(\theta )=\exp \Biggl\{ \sum
_{k = 1}^{\infty }{{\alpha _{k}}\mu (A)
\bigl({e^{ - k\theta }} - 1 \bigr)} \Biggr\}.
\end{equation}

For the subsequent step, let ${{\tilde{\phi }}_{j}}({x_{i}}): = \frac{
{{\phi _{j}}({x_{i}})\theta }}{{\theta + \mathrm{{E}}{Y_{i}}}} \le
{\phi _{j}}({x_{i}})$ and we want to evaluate the complementary events
of the true parameter version of the KKT conditions:
\begin{align}
&P \Biggl( { \Biggl|{\sum_{i = 1}^{n}
{\frac{{{\phi }_{j} ({{{x}}
_{i}})({Y_{i}} - \mathrm{E}{Y_{i}})\theta }}{{\theta + \mathrm{E}
{Y_{i}}}}} } \Biggr|\ge {w_{j}}} \Biggr)= P
\Biggl( { \Biggl| \sum_{i = 1}^{n}
{{\tilde{\phi }}_{j} ({{{x}}_{i}})
({Y_{i}} - E{Y_{i}})} \Biggr|\ge
{w_{j}}} \Biggr) \quad \text{for } j = 1,2, \ldots ,p.\label{eq:kkt}
\end{align}
The aim is to find the values $\{ {w_{j}}\} _{j = 1}^{p}$ by the
concentration inequalities we proposed. Then it is sufficient to
estimate the probability in the right-hand side of the above inequality.

Let ${d^*} = |\{ j:\beta _j^* \ne 0\} |$, the true coefficient vector ${{\bf{\beta }} ^*}$ is defined by
\begin{equation}\label{eq:oracle}
{{{\beta }} ^{\rm{*}}}{\rm{ = }}\mathop {\rm{argmin}}\limits_{{\beta}  \in {{\mathbb{R}}^p}} {\rm{E}} R(Y,X,{{\beta }}).
\end{equation}
For the optimization problem of maximizing the $\ell_{1}$-penalized empirical likelihood, our establishment is that KKT conditions are satisfied for the estimated parameter. But here we use the true parameter version of KKT conditions
\begin{equation}\label{eq:weights}
|{\dot \ell _j}({{\bf{\beta }} ^{\rm{*}}})| =|\frac{1}{n}\sum\limits_{i = 1}^n {\frac{{{\phi _j} ({{x}_i})({Y_i} - {\rm{E}}{Y_i})\theta }}{{\theta  + {\rm{E}}{Y_i}}}}| \le {w_j}
\end{equation}
to approximate estimated parameter version of KKT conditions. This approach of defining data-driven weights ${w_j}$ is proposed in \cite{Ivanoff16}, we want to find ${w_j}$ such that the event \eqref{eq:weights} hold with high probability for all $j's$.

Recall the weighted Poisson decomposition of the NB point processes
\[NB(A) := \sum\limits_{k = 1}^\infty  {k{N_k}(A)} \]
where ${N_k}(A)$ is inhomogeneous Poisson point process with intensity ${{\alpha _k}\int_A {\lambda (x)} dx}$ and ${\alpha _k} = \frac{{{p^k}}}{{ - k\log (1 - p)}}$.

 Let $\mu (A): = \int_A {\lambda (x)} dx$, the m.g.f. of $NB(A)$ is written as
\begin{equation} \label{eq:nbp}
{M_{NB(A)}}(\theta )=\exp \{ \sum\limits_{k = 1}^\infty  {{\alpha _k}\mu (A)({e^{ - k\theta }} - 1)} \}.
\end{equation}

For the subsequent step, let ${{\tilde \phi }_j}({x_i}): = \frac{{{\phi _j}({x_i})\theta }}{{\theta  + {\rm{E}}{Y_i}}} \le {\phi _j}({x_i})$ and we want to evaluate the complementary events of true parameter version of KKT conditions:
\begin{equation} \label{eq:kkt}
P\left( {\left| {\sum\limits_{i = 1}^n {\frac{{{\phi}_j ({{\bf{x}}_i})({Y_i} - {\rm E}{Y_i})\theta }}{{\theta  + {\rm E}{Y_i}}}} } \right| \ge {w_j}} \right) = P\left( {|\sum\limits_{i = 1}^n {{\tilde \phi }_j ({{\bf{x}}_i})({Y_i} - {\rm E}{Y_i})} | \ge {w_j}} \right) ~~\text{for}~~j = 1,2, \cdots ,p.
\end{equation}
The aim is to find the value $\{ {w_j}\} _{j = 1}^p$ by the concentration
inequalities we proposed. Then it is sufficient to estimate the probability in the right hand of the above inequality.

Let $\mu ({A})$ be a Lebesgue measure for $A \in \mathbb{R}^d$ and $N_{i,k}({A})$ be Poisson random measure with rate $\frac{{{\alpha _k}(i)\mu ({A})}}{{{\mu _i}}}$, where ${\mu _i} = \sum\limits_{k = 1}^\infty  {k{\alpha _k}(i)}$ with ${\alpha _k}(i) := \frac{{q_i^k}}{{ - k\log (1 - {q_i})}}$ and $\{{q_i} := \frac{{{\mu _i}}}{{\theta  + {\mu _i}}}\}_{i=1}^{n}$. We define the independent NB point processes $NB_{i}(A)$ as :
$$NB_{i}(A) =: \sum\limits_{k = 1}^{\infty} {k{N_{i,k}}(A)},~~\text{with}~~{\rm{E}}[NB_{i}({A})] = \sum\limits_{k = 1}^\infty  k \frac{{{\alpha _k}(i)\mu ({A})}}{{{\mu _i}}} = \mu ({A})$$
where $\{{N_{i,k}}(A)\}$ are independent for each $k, i$.

For ${\left[ {0,1} \right]^d}:= \cup _{i = 1}^n{S_i}$ being a disjoined union, we assume that the intensity function
$$\lambda (t) = \sum\limits_{i = 1}^n {\frac{{h({X_i})}}{{\mu ({S_i})}}} {{\mathrm{I}}_{{S_i}}}(t)$$
is a histogram type estimator (\cite{Baraud2009}). It is just a piece-wise constant intensity function.

Now we will apply Corollary~\ref{col:ce}, note that
$$\mu ({S_i}) = \int_{{S_i}} {\lambda (t)d} t ={h({x_i})}:={\rm{E}} Y_i$$
via the definition of $\lambda (t)$ that we defined before.

Thus, both $(N{B}_1({S_1}), \ldots ,N{B}_n({S_n}))$ and $({Y_1}, \ldots ,{Y_n})$ have the same law by the weighted Poisson decomposition.

Choosing $f(x)={g_j}(x) = \sum\limits_{l = 1}^n {{{\tilde \phi }_j}({{{x}}_l}){{\rm{1}}_{{S_l}}}(x)} $, it derives
\[\sum\limits_{i = 1}^n {\int_{{S_i}} {g_j(x)} NB_i(dx) = } \sum\limits_{i = 1}^n {\int_{{S_i}} {\sum\limits_{l = 1}^n {{{\tilde \phi }_j}({{{x}}_l}){{\rm{I}}_{{S_l}}}(x)} } NB_i(dx) = \sum\limits_{i = 1}^n {{{\tilde \phi }_j}({{{x}}_i}){Y_i}} } \]
for each $j$.

Assume that there exists constants $C_{1},C_{2}$ such that
\begin{equation}\label{eq:c1c2}
{C_1} =\mathop {\max }\limits_{1 \le i \le n}\mu_i = \mathop {\max }\limits_{1 \le i \le n} \sum\limits_{k = 1}^\infty  {k{\alpha _k}(i)}  = O(1)~\text{and}~O(1) = \mathop {\max }\limits_{1 \le i \le n} h({x_i}) \le {C_2}.
\end{equation}
Then
\begin{align*}
{V_{i,{g_j}}} = \int_{{S_i}} {g_j^2(x)} \lambda (x)dx &= \int_{{S_i}} {\sum\limits_{l = 1}^n {\tilde \phi _j^2({x_l}){{\rm{I}}_{{S_l}}}(x)} } \sum\limits_{i = 1}^n {\frac{{h({x_i})}}{{\mu ({S_i})}}} {{\rm{I}}_{{S_i}}}(t)dx\\
& = \tilde \phi _j^2({x_i})h({x_i}) \le {C_2}\mathop {\max}\limits_{1 \le i \le n} \tilde \phi _j^2({x_i})\le {C_2}\mathop {\max}\limits_{1 \le i \le n} \phi _j^2({x_i}),
\end{align*}
$${ \|{{g_{j}}(x)}  \|_{\infty }}\le \mathop {\max }\limits_{i = 1,...,n} \left| {{\phi _j}({{{x}}_i})} \right|.$$
From (\ref{eq:kkt}), we apply concentration inequality in Corollary~\ref{col:ce} to the last probability in inequality below
\begin{gather*}
P \Biggl( {\frac{1}{n} \Biggl|{\sum
_{i = 1}^{n} {{{\tilde{\phi }}
_{j}}({\mathbf{{x}}_{i}}) ({Y_{i}} -
{\rm E}{Y_{i}})} } \Biggr| \ge {C_{1}}\biggl[\sqrt{2y
{C_{2}\mathop {\max}\limits_{1 \le i \le n} \phi _j^2({x_i})}} + \frac{y}{3}{{\|
{{g_{j}}(x)} |}_{\infty
}}\biggr]} \Biggr)
\\
\quad \le P \Biggl( { \Biggl| {\frac{1}{n}\sum
_{i = 1}^{n} { \biggl\{ {
\int _{{S_{i}}} {g(x)} \bigl[{NB}_i(dx) - {\mu
_{i}}\lambda (x)\,dx\bigr]} \biggr\} } } \Biggr|\ge
\frac{1}{n}\sum_{i = 1}^{n} {{
\mu _{i}}\biggl[\sqrt{2y {V_{i,{g_{j}}}}} +
\frac{y}{3}{{ \| {{g_{j}}(x)} \| }
_{\infty }}\biggr]} } \Biggr).
\end{gather*}
Now we can let ${w_{j}} = {C_{1}}[\sqrt{2y{C_{2}\mathop {\max}\limits_{1 \le i \le n} \phi _j^2({x_i})}} +
\frac{y}{3} \mathop {\max }\limits_{i = 1,...,n} \left| {{\phi _j}({{{x}}_i})} \right|]$.

From (\ref{eq:kkt}), we apply concentration inequality in Corollary~\ref{col:ce} to the last probability in above inequality. This implies
\[P\left( {\frac{1}{n}\left| {\sum\limits_{i = 1}^n {\frac{{{\phi _j}({{\bf{x}}_i})({Y_i} - {\rm E}{Y_i})\theta }}{{\theta  + {\rm E}{Y_i}}}} } \right| \ge {w_j}} \right) \le P\left( {\left| {\frac{1}{n}\sum\limits_{i = 1}^n {\left\{ {\int_{{S_i}} {g_j(x)} [NB_i(dx) - {\mu _i}\lambda (x)dx]} \right\}} } \right| \ge {w_j}} \right) \le 2{e^{ - ny}}\]
Take $y = \frac{\gamma }{n}\log p$ where $\gamma >0$, we have
\begin{equation} \label{eq:pe}
P\left( {\frac{1}{n}\left| {\sum\limits_{i = 1}^n {\frac{{{\phi _j}({{\bf{x}}_i})({Y_i} - {\rm E}{Y_i})\theta }}{{\theta  + {\rm E}{Y_i}}}} } \right| \ge {w_j}} \right) \le \frac{2}{{{p^\gamma }}}.
\end{equation}
The weights is proportional to
\[{w_j} \propto \sqrt {\frac{{2\gamma \log p}}{n}} C_1(\mathop {\max }\limits_{i = 1,...,n} \left| {{\phi _j}({{\bf{x}}_i})} \right|) + \frac{{\gamma \log p}}{{3n}}\mathop {\max }\limits_{i = 1,...,n} \left| {{\phi _j}({{\bf{x}}_i})} \right|.\]
Here the $\gamma$ can be treated as tuning parameters. In high-dimensional statistics, we often have dimension assumption $\frac{{\log p}}{{n}}=o(1),\mathop {\max }\limits_{i = 1,...,n} \left| {{\phi _j}({{\bf{x}}_i})} \right|<\infty$, the term $\frac{{\gamma \log p}}{{3n}}\mathop {\max }\limits_{i = 1,...,n} \left| {{\phi _j}({{\bf{x}}_i})} \right|$ is negligible in the weights, comparing to the term $\sqrt {\frac{{2\gamma \log p}}{n}}$.

Then for large dimension $p$, the above upper bound of the probability inequality tends to zero. Therefore, with high probability, the event of the KKT condition holds.

Based on (\ref{eq:pe}) and some regular assumptions such as
compatibility factor (a generalized restricted eigenvalue condition),
we will derive the following $\ell _{1}$-estimation error with high
probability:
\begin{equation*}
\bigl\|{\hat{\beta }- {\beta ^{*}}} \bigr\|
_{1} \le O_{p} \biggl({d^{*}}\sqrt{
\frac{
{\log p}}{n}} \biggr)
\end{equation*}
in the next section.

Under the restricted eigenvalue condition proposed by \cite{Bickel09}, oracle inequalities for a weighted Lasso regularized Poisson binomial regression are studied in \cite{Ivanoff16}. Since
the negative binomial is neither sub-gaussian nor a member of the
exponential family, our extension is not straightforward. We employ the
assumption of compatibility factor which renders the proof simpler than
that of restricted eigenvalue condition.

\subsection{Oracle inequalities for the weighted Lasso in count data regressions}
\label{sec4.2}

The ensuing subsections are described in two steps: The first step is
to construct two lemmas to get a lower or upper bound for symmetric
Bregman divergence in the weighted Lasso case. Adopting a symmetric
Bregman divergence, the second step is to derive oracle inequalities by
some inequality scaling tricks.

%s4.2.1 #&#
\subsubsection{Case of NB regression}\label{nb}

The symmetric Bregman divergence (sB divergence) is defined by
\begin{equation*}
{D^{s}}(\hat{\beta },\beta ) = {(\hat{\beta }- \beta
)^{T}} \bigl( \dot{\ell }(\hat{\beta }) - \dot{\ell }(\beta )
\bigr);
\end{equation*}
see Nielsen and Nock \cite{Nielsen09}.

The following lemma is a crucial inequality for deriving the oracle
inequality under the assumption of a compatibility factor which is also
used in Yu \cite{Yu2010} for the Cox model. Let ${\beta ^{*}}$ be the
true estimated coefficients vector defined by \eqref{eq:oracle}, and
$H = \{ j:\beta _{j}^{*} \ne 0\} $, ${H^{c}} = \{ j:\beta _{j}^{*} = 0\}$.
Then we have the following.

\begin{lemma}
\label{lem:ulb}
Let $\hat{\beta }$ be the estimate in (\ref{eq:adlasso}), set
${l_{m}} := {\| {\dot{l}({\beta ^{{{*}}}})} \|_{\infty }}$, and
$\tilde{\beta }= \hat{\beta }- {\beta ^{*}}$. Then, we have
\begin{equation*}
({w _{\min }} - {l_{m}}){ \|{{{\tilde{\beta
}}_{{H^{c}}}}} \| _{1}} \le {D^{s}}
\bigl( \hat{\beta },\beta ^{*} \bigr) + ({w _{\min }} -
{l_{m}}){ \| {{{\tilde{\beta }}_{{H^{c}}}}} \|
_{1}} \le ({w _{\max }} + {l_{m}}) {
\| {{{\tilde{\beta }}_{H}}} \| _{1}},
\end{equation*}
where ${w_{\max }} =  \max_{1 \le j \le p} {w_{j}}$, ${w_{\min }} =  \min_{1 \le j \le p} {w_{j}}$.

Moreover, in the event ${l_{m}} \le \frac{{\xi - 1}}{{\zeta + 1}}
{w_{\min }}$ for any $\xi > 1$ and ${\| {\varPhi ({{X}})} \|_{\infty }} \le K$, with probability $1-{p^{1 - O(1){r
^{2}}}}$, we have
\begin{equation}
\label{eq:ulb} {\| {{{\tilde{\beta }}_{{H^{C}}}}}
\|_{1}} \le \frac{{\xi {w _{\max }}}}{
{{w _{\min }}}}{ \|{{{\tilde{\beta
}}_{H}}} \|_{1}}.
\end{equation}
\end{lemma}

\begin{proof}
The left inequality is obtained by ${D^{s}}(\hat{\beta },{\beta ^{{
{*}}}}) = {(\hat{\beta }- {\beta ^{{{*}}}})^{T}}[\dot{\ell }(
\hat{\beta }) - \dot{l}({\beta ^{{{*}}}})] \ge 0$ due to convexity
of the function $\ell (\beta )$. Note that
\begin{align*}
{{\tilde{\beta }}^{T}} \bigl\{ \dot{\ell } \bigl({\beta
^{{{*}}}} + \tilde{\beta } \bigr) - \dot{\ell } \bigl({\beta
^{{{*}}}} \bigr) \bigr\} &= \sum_{j \in {H^{c}}}
{{{ \tilde{\beta }}_{j}}} { \dot{\ell }_{j} \bigl(
{\beta ^{{{*}}}} + \tilde{\beta } \bigr)} + \sum
_{j \in H} {{{\tilde{\beta }}_{j}}}{ \dot{\ell
}_{j} \bigl({ \beta ^{{{*}}}} + \tilde{\beta } \bigr)} +
{{\tilde{\beta }}^{T}} \bigl( - \dot{\ell } \bigl({\beta
^{{
{*}}}} \bigr) \bigr)
\\
 & \le \sum_{j \in {H^{c}}} { -
{w_{j}} {{\hat{\beta }}_{j}}} \operatorname{{sign}}({{
\hat{\beta }}_{j}}) + \sum_{j \in H} {w
_{j}} { | {{\tilde{\beta }}_{j}} |
} + \| \tilde{\beta } \| _{1}{l_{m}}
\\
& = \sum_{j \in {H^{c}}} { - {w _{j}} |
{{{\hat{\beta }}_{j}}} |  } + \sum
_{j \in H} {{w _{j}} | {{{\tilde{\beta
}}_{j}}} | } + {l_{m}}
\|  {{{\tilde{\beta }}_{H}}} \| _{1} +
{l_{m}} \| {{{\tilde{\beta }}_{{H^{c}}}}} \|
_{1}
\\
& \le \sum_{j \in {H^{c}}} { - {w _{\min }}
|  {{{\hat{\beta }} _{j}}} | } + \sum
_{j \in H} {{w _{\max }} | {{{\tilde{
\beta }} _{j}}} | } + {l_{m}}
\| {{{\tilde{\beta }}_{H}}} \|_{1} +
{l_{m}} \|  {{{\tilde{\beta }}_{{H^{c}}}}} \|
_{1}
\\
&= ({l_{m}} - {w _{\min }}) \|  {{{\tilde{\beta
}}_{{H^{c}}}}} \| _{1} + ({w _{\max }} +
{l_{m}}) \|  {{{\tilde{\beta }}_{H}}} \|
_{1}.
\end{align*}
where the second last inequality follows from KKT conditions and dual norm inequality.

Thus we obtain (\ref{eq:ulb}). In fact, inequality (\ref{eq:ulb}) turns
to
%
%e28 #&#
\begin{equation}
\label{eq:ie2} \frac{{2{w _{\min }}}}{{\xi + 1}} \|  {{{\tilde{\beta
}}_{{H^{c}}}}} \| _{1} \le {D^{s}} \bigl(
\hat{\beta },\beta ^{*} \bigr) + \frac{{2{w _{\min }}}}{
{\xi + 1}} \|  {{{
\tilde{\beta }}_{{H^{c}}}}} \| _{1} \le
\frac{{2
\xi {w _{\max }}}}{{\xi + 1}} \|  {{{\tilde{\beta }}_{H}}} \|
_{1}
\end{equation}
due to ${w _{\min }} - {l_{m}} \ge \frac{{2{w _{\min }}}}{{\xi + 1}}$,
${w _{\max }} + {l_{m}} \le \frac{{2\xi }}{{\xi + 1}}{w _{\max }}$.

For the event $\{ \|\dot{\ell }({\beta ^{{{*}}}})|{|_{\infty }}
\le \frac{{\xi - 1}}{{\xi + 1}}{w_{\min }}\}$, applying the NB
concentration inequality \eqref{eq:f2}, we have
\begin{align*}
%\label{eq:pe}
&P \biggl( { \bigl\|  \dot{\ell } \bigl({\beta ^{{{*}}}}
\bigr) \bigr\|  _{\infty } \ge \frac{
{\xi - 1}}{{\xi + 1}}{w_{\min }}}
\biggr)
\\*
&\quad  \le pP \Biggl( { \Biggl|  {\sum_{i = 1}^{n}
{\frac{{{\phi _{1}}({x_{i}})\theta }}{{(
\theta + E{Y_{i}})}}({Y_{i}} - E{Y_{i}})} } \Biggr
|  \ge \frac{{\xi -
1}}{{\xi + 1}}n{w_{\min }}} \Biggr)
\\
&\quad  \le p\exp \biggl\{ { - n{{ \biggl( {\sqrt{\frac{{\xi - 1}}{{\xi +
1}} \cdot
\frac{{n{w_{\min }}}}{{{c_{2n}}}} + \frac{{c_{1n}^{2}}}{
{4c_{2n}^{2}}}} - \frac{{{c_{1n}}}}{{2{c_{2n}}}}}
\biggr)}^{2}}} \biggr\}
\\
&\quad  = p\exp \biggl\{ { - n{{ \biggl( {{\frac{{\xi  - 1}}{{\xi  + 1}} \cdot \frac{{n{w_{\min }}}}{{{c_{2n}}}}} \mathord{\left/
 {\vphantom {{\frac{{\xi  - 1}}{{\xi  + 1}} \cdot \frac{{n{w_{\min }}}}{{{c_{2n}}}}} {\sqrt {\frac{{\xi  - 1}}{{\xi  + 1}} \cdot \frac{{n{w_{\min }}}}{{{c_{2n}}}} + \frac{{c_{1n}^2}}{{4c_{2n}^2}}}  + \frac{{{c_{1n}}}}{{2{c_{2n}}}}}}} \right.
 \kern-\nulldelimiterspace} {\sqrt {\frac{{\xi  - 1}}{{\xi  + 1}} \cdot \frac{{n{w_{\min }}}}{{{c_{2n}}}} + \frac{{c_{1n}^2}}{{4c_{2n}^2}}}  + \frac{{{c_{1n}}}}{{2{c_{2n}}}}}}\biggr)}^{2}}} \biggr\}
\\
&\quad  \sim p\exp \bigl\{ { - nw_{\min }^{2}O(1)} \bigr\} \sim
{p ^{1 - O(1){r^{2}}}}.
\end{align*}
where the second last $\sim$ is due to $c_{1n}=O(n)=c_{2n}$ by the assumption \eqref{eq:c1c2} and the the last $\sim$ is by letting ${w_{\min }} = O(r\sqrt{\frac{{\log p}}{n}})$.

Here $r>0$ is a
suitable tuning parameter, such that the event $\{ \|\dot{\ell }(
{\beta ^{{{*}}}})|{|_{\infty }} \le \frac{{\xi - 1}}{{\xi + 1}}
{w_{\min }}\}$ holds with probability tending to 1 as $p,n \to \infty$.
\end{proof}

In order to derive Theorem~\ref{theo-oi}, we adopt a crucial inequality
by using the similar approach in Lemma~5 of \cite{Zhang17}, which
is the technique of Taylor's expansion for convex log-likelihood
functions.

\begin{lemma}\label{lem:ie3}
 Let the Hessian matrix $\ddot \ell(\beta)$ be given in (\ref{eq:Hessian}), and for $\delta \in {\mathbb R}^p$, assume the
identifiability condition: ${\phi ^{T}}({{x}_{i}})(\beta + \delta )={\phi ^{T}}({{x}_{i}})\beta $ implies ${\phi ^{T}}({{x}_{i}})\delta  = 0$, then we have
\[{D^s}(\beta  + \delta,\beta ) \ge {\delta^T}\ddot \ell(\beta )\delta{e^{ - 2{{|| { \Phi ({{X}})} ||}_\infty }|| \delta ||}}\]
where ${|| {\Phi ({X})} ||_\infty } = \max \{ | {\phi _j({x_{ij}})} |;1 \le i \le n,1 \le j \le p\} .$
\end{lemma}

By the result in the second parts of Theorem~\ref{lem:ulb}, it says that in the event $\left\{ {{{|| {\dot \ell({\beta ^{\rm{*}}})} ||}_\infty } \le \frac{{\xi  - 1}}{{\xi  + 1}}{\lambda _{\min }}} \right\}$, the error of estimate $\tilde \beta  = \hat \beta  - {\beta ^*}$ belongs to the following cone set:
\begin{equation}\label{eq:re}
{\mathop{\rm S}\nolimits} (\eta ,H) = \{ {b} \in {\mathbb{R}^p}:{|| {{{b}_{{H^c}}}} ||_1} \le \eta {|| {{{b}_H}} ||_1}\}
\end{equation}
with $\eta= \frac{{\xi {w_{\max }}}}{{{w_{\min }}}}$.

Let ${d^{*}(\beta ^*)} = :\left| {H(\beta ^*)} \right|$ with $H(\beta ^*) = :\{ j:{\beta ^*_j} \ne 0,~\beta ^* = ({\beta ^*_1}, \cdots ,{\beta ^*_j}, \cdots ,{\beta ^*_p}) \in {\mathbb{R}^p}\} $. Sometimes, we write $H(\beta ^*)$ as $H$ and $d^{*}(\beta ^*)$ as $d^{*}$ if there is no ambiguity. Define compatibility factor (see \cite{Geer2007}) of a $p \times p$ nonnegative-definite matrix $\Sigma$ as
\[C(\eta ,H,\Sigma ) = \mathop {\inf }\limits_{0 \ne b \in {\rm{S}}(\eta ,H)} \frac{{{({d^o(b)})^{1/2}}{{({b^T}\Sigma b)}^{1/2}}}}{{{{\| {{b_H}} \|}_1}}} > 0,~~(\eta  \in \mathbb{R}).\]
where ${\rm{S}}(\eta ,H)=\{ b \in {\mathbb{R}^p}:{\| {{b_{{H^c}}}} \|_1} \le \eta {|| {{b_H}} ||_1}\}$ is the cone condition.

The compatibility factor is strongly linked to the restricted eigenvalue  proposed by \cite{Bickel09},
\begin{equation} \label{eq:Bickel09}
RE(\eta ,H,\Sigma ) = \mathop {\inf }\limits_{0 \ne {{b}} \in {\rm{S}}(\eta ,H)} \frac{{{{({{{b}}^T}\Sigma {{b}})}^{1/2}}}}{{{\left\| b \right\|_2}}} > 0 ,(\eta  \in {\mathbb{R}}).
\end{equation}
Actually, by Cauchy-Schwarz inequality, it is evident that ${[C(\eta ,H,\Sigma )]^2} \ge {[RE(\eta ,H,\Sigma )]^2}$. In the proof of Theorem~\ref{theo-oi}, the upper bounds of oracle inequalities are sharper if we use a compatibility factor rather than restricted eigenvalue.

Next, let $C(\eta ,H) = C(\xi ,H,\ddot l({\beta ^*}))$, then we have the following theorem which is analogous to \cite{Zhang17} who gives oracle inequalities for the Elastic-net estimate in NB regression.
\begin{theorem}\label{theo-oi}
Suppose ${|| {\Phi ({{X}})} ||_\infty } \le K$ with some $K > 0$ and let $\tau  = \frac{{K(\xi  + 1){d^*}w_{\max }^2}}{{2{w_{\min }}{{\left[ {C(\xi {w_{\max }}/{w_{\min }},H)} \right]}^2}}} \le {\frac{1}{2}e^{ - 1}}$ with assumption \eqref{eq:c1c2}. Setting ${w_{\min }} = O(r\sqrt {\frac{{\log p}}{n}})$ where $r>0$ is a suitable tuning parameter, then in the event $\{ ||\dot \ell ({\beta ^{\rm{*}}})|{|_\infty } \le \frac{{\xi  - 1}}{{\xi  + 1}}{w_{\min }}\}$ with probability tending to 1 as $p,n \to \infty$, we have
\[{|| {\hat \beta  - {\beta ^{\rm{*}}}} ||_1} \le \frac{{{e^{{2a_\tau}}}(\xi  + 1){d^*}w_{\max }^2}}{{2{w_{\min }}{{\left[ {C(\xi {w_{\max }}/{w_{\min }},H)} \right]}^2}}}={O_p}({d^*}\frac{{w_{\max }^2}}{{{w_{\min }}}})\]
where ${a_\tau } \le \frac{1}{2}$ is smaller a solution of the equation $a {e^{ - 2a }} = \tau$.
And
\[{D^s}(\hat \beta ,{\beta ^*}) \le \frac{{4{e^{2{a_\tau }}}{\xi ^2}{d^*}w_{\max }^2}}{{{{(\xi  + 1)}^2}{{\left[ {C(\xi {w_{\max }}/{w_{\min }},H)} \right]}^2}}}\]
\end{theorem}
\begin{proof}
Let $\tilde \beta  = \hat \beta  - {\beta ^*} \ne 0$ and
$b = \tilde \beta /{|| {\tilde \beta } ||_1}$, and observe that $ \ell({\beta ^*} + {b}x)$ is a convex function in $x$ since $\ell(\beta )$ is convex. From the proof of (\ref{eq:ie2}) in Lemma~\ref{lem:ulb}, we have
\begin{equation} \label{eq:ie-sbd}
{{b}^T}[\dot l({\beta ^*} + {b}x) - \dot \ell({\beta ^*})] + \frac{{2{w_{\min }}}}{{\xi  + 1}}{|| {{{b}_{{H^C}}}} ||_1} \le \frac{{2\xi {w_{\max }}}}{{\xi  + 1}}{|| {{{b}_H}} ||_1}
\end{equation}
holding for $x \in [0,{|| {\tilde \beta } ||_1}]$.

Since ${\ell(\beta )}$ is a convex in $\beta$, then the function ${{b}^T}[\dot \ell(\beta  + {b}x) - \ell(\beta )]$ is increasing in $x$.) and ${b} \in {\rm{S}}(\xi {w_{\max }}/{w_{\min }},H)$.
 By the assumption, we have ${|| { \Phi ({X}) } ||_\infty }|| {x{b}} ||_1 \le Kx$ by noticing that $|| {b} ||_1 = 1$. And assuming ${|| {\Phi ({{x}})} ||_\infty } \le K$), we obtain following by the Lemma~\ref{lem:ie3}
\[
x{b^T}[\dot \ell({\beta ^*} + {b}x) - \dot \ell({\beta ^*})] \ge {x^2}{e^{ - 2{{|| { \Phi ({{X}})} ||}_\infty }|| {x{b}} ||}}{{b}^T}\ddot \ell(\beta ){b} \ge {x^2}{e^{ - 2Kx}}{{b}^T}\ddot \ell(\beta ){b},
\]
this implies
\begin{equation} \label{eq:ie4}
{{b}^T}[\dot \ell({\beta ^*} + {b}x) - \dot \ell({\beta ^*})] \ge x{e^{ - 2Kx}}{{b}^T}\ddot \ell(\beta ){b}.
\end{equation}

For the Hessian matrix evaluated at the true coefficient ${{\beta ^*}}$, the compatibility factor is written as
${\rm{C}}(\eta ,H)= :{\rm{ C}}(\eta ,H,\ddot \ell ({\beta ^*}))$. From the definition of compatibility factor and the inequality \eqref{eq:ie4}, we have
\begin{align*}
Kx{e^{ - 2Kx}}{\left[ {C(\xi {w_{\max }}/{w_{\min }},H)} \right]^2}{|| {{{b}_H}} ||^2}/{d^*} & \le Kx{e^{ - 2Kx}}{{b}^T}\ddot \ell(\beta ){b} \\
(\text{by \eqref{eq:ie4}})~~ & \le K{{b}^T}[\dot \ell({\beta ^*} + {b}x) - \dot \ell({\beta ^*})] \\
(\text{by~(\ref{eq:ie2})})~~ & \le K(\frac{{2\xi {w_{\max }}}}{{\xi  + 1}}{|| {{{b}_H}} ||_1} - \frac{{2{w_{\min }}}}{{\xi  + 1}}{|| {{{b}_{{H^c}}}} ||_1})\\
& = K[\frac{{2\xi {w_{\max }}}}{{\xi  + 1}}{|| {{{b}_H}} ||_1} - \frac{{2{w_{\min }}}}{{\xi  + 1}}(1 - {|| {{{b}_H}} ||_1})] \\
& \le K[\frac{2}{{\xi  + 1}}(\xi {w_{\max }} + {w_{\max }}){|| {{{b}_H}} ||_1} - \frac{{2{w_{\min }}}}{{\xi  + 1}}] \\
& = K[2{w_{\max }}{|| {{{b}_H}} ||_1} - \frac{{2\xi {w_{\min }}}}{{\xi  + 1}}] \\
& \le \frac{{K(\xi  + 1)|| {{{b}_H}} ||_1^2w_{\max }^2}}{{2{w_{\min }}}}
\end{align*}
where the last inequality is due to $\frac{{2{w_{\min }}}}{{\xi  + 1}} + {\rm{ }}\frac{{(\xi  + 1){{|| {{{b}_H}} ||}_1}w_{\max }^2}}{{2{w_{\min }}}} \ge 2{w_{\max }}{|| {{{b}_H}} ||_1}$.

Then we have
\begin{equation} \label{eq:ie5}
Kx{e^{ - 2Kx}} \le \frac{{K(\xi  + 1){d^*}w_{\max }^2}}{{2{w_{\min }}{{\left[ {C(\xi {w_{\max }}/{w_{\min }},H)} \right]}^2}}} = :\tau
\end{equation}
for any $x \in [0,{|| {\tilde \beta } ||_1}]$.

So with constrain $x \in [0,{|| {\tilde \beta } ||_1}]$, let ${a_\tau },{b_\tau },({a_\tau }<0.5<b_\tau )$ be the two solutions of the equation $\{z: z {e^{ - 2z}} = \tau\} $. Let $S = [0,{|| {\tilde \beta } ||_1}] \cap \{ ( - \infty ,{a_\tau }] \cup [{b_\tau }, + \infty )\}$ satisfy \eqref{eq:ie5}, see Figure 1.

\begin{figure}[!ht]
\centering
\includegraphics[height=0.4\textwidth=0.8]{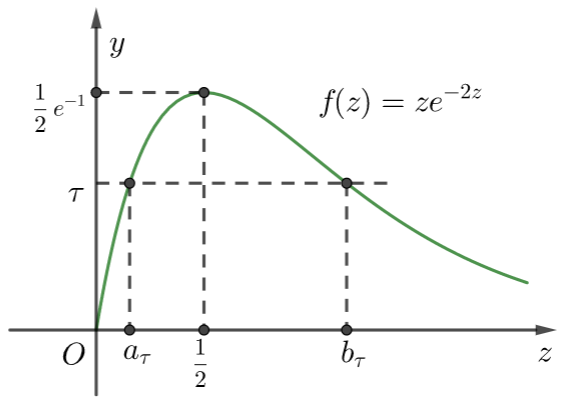}
\caption{The two solutions of the equation $\{z: z {e^{ - 2z}} = \tau\} $.}
\end{figure}

Note that ${{b}^{T}}[\dot{\ell }(
\beta + {b}x) - \ell (\beta )]$ is an increasing function of $x$, then
$S$ cannot be a disjoint union of the two intervals by the restriction condition in \eqref{eq:ie-sbd}, thus $S$ is a
closed interval $x \in [0,\tilde{x}]$. The fact that all $x \in [0,{\|
{\tilde{\beta }} \|_{1}}]$ implies $x \in [0,\tilde{x}]$, therefore we have
\begin{equation*}
Kx \le K\tilde{x} \le {a_{\tau }}.
\end{equation*}

Use (\ref{eq:ie5}) again, it implies $K\tilde x{e^{ - 2K\tilde x}} \le \tau $.  Then,  for $\forall x \in [0,\tilde x] $, by we have
\[{|| {\tilde \beta} ||_1} = || {{b}x} ||_1 = || {b} ||_1 x = x \le \tilde x \le \frac{{{a_\tau }}}{K} = \frac{{{e^{2{a_\tau }}}\tau }}{K} = \frac{{{e^{2{a_\tau }}}(\xi  + 1){d^*}w_{\max }^2}}{{2{w_{\min }}{{\left[ {C(\xi {w_{\max }}/{w_{\min }},H)} \right]}^2}}}\]

It remains to show the oracle inequality of ${D^s}(\hat \beta ,\beta ^*)$. Let $\delta={b}x$ in Lemma~\ref{lem:ie3},  Using the definition of compatibility factor, and observing that ${a_\tau } \ge Kx ~\text{and}~ Kx \ge {|| { \Phi ({{X}})} ||_\infty }|| \delta ||_1$, it derives
\begin{align*}
{e^{ - 2{a_\tau }}}{\left[ {C(\xi {w_{\max }}/{w_{\min }},H)} \right]^2}|| {{{\tilde \beta }_H}} ||_1^2/{d^*} \le {e^{ - 2{a_\tau }}}{{\tilde \beta}^T}\ddot \ell({\beta ^*})\tilde \beta & \le {e^{ - 2Kx}}{{\tilde \beta }^T}\ddot l({\beta ^*})\tilde \beta \\
& \le {e^{ - 2{{|| { \Phi ({X}) } ||}_\infty }|| \delta  ||_1}}{{\tilde \beta }^T}\ddot \ell({\beta ^*})\tilde \beta.
\end{align*}
Apply Lemma~\ref{lem:ie3}, it immediately deduces that
\[{e^{ - 2{a_\tau }}}{\left[ {C(\xi {w_{\max }}/{w_{\min }},H)} \right]^2}|| {{{\tilde \beta }_H}} ||_1^2/{d^*} \le{e^{ - 2{{|| { \Phi ({{X}})} ||}_\infty }|| \delta  ||_1}}{{\tilde \beta }^T}\ddot \ell({\beta ^*})\tilde \beta \le{D^s}(\hat\beta ,\beta ^*) \le \frac{{2\xi {w_{\max }}}}{{\xi  + 1}}{|| {{{\tilde \beta}_H}} ||_1}\]
where the last inequality is from (\ref{eq:ie-sbd}).

Cancel the term ${|| {{{\tilde \beta }_H}} ||_1}$ on both sides of the inequality above, we get
\[{|| {{{\tilde \beta }_H}} ||_1} \le \frac{{2{e^{2{a_\tau }}}\xi {d^*}{w_{\max }}}}{{(\xi  + 1){{\left[ {C(\xi {w_{\max }}/{w_{\min }},H)} \right]}^2}}}.\]
Consequently,
\[{D^s}(\hat \beta ,{\beta ^*}) \le \frac{{2\xi {w_{\max }}}}{{\xi  + 1}}{|| {{{\tilde \beta }_H}} ||_1} \le \frac{{4{e^{2{a_\tau }}}{\xi ^2}{d^*}w_{\max }^2}}{{{{(\xi  + 1)}^2}{{\left[ {C(\xi {w_{\max }}/{w_{\min }},H)} \right]}^2}}}.\]
\end{proof}

Theorem~\ref{theo-oi} indicates that, if we presume ${d^{*}} = O(1)$,
the $l_{1}$-estimation error bound is of the order $\sqrt{\frac{
{\log p}}{n}} $. And this convergence rate is rate-optimal in the
minimax sense; see \cite{Raskutti2011} and \cite{Li2018}. Under some regularity conditions, the weighted Lasso estimates are expected to have the consistency property in the sense
of the $\ell _{1}$-norm when the dimension increases as order
$\exp (o(n))$. Unlike the MLE estimator with convergence rate
$\frac{1}{{\sqrt{n} }}$, in the high-dimensional case we have to
multiply $\sqrt{\log p} $ to the rate of MLE as the price to pay.

\subsubsection{Case of Poisson regression}
In this section, we use the same procedure as in Sect.~\ref{nb} to
prove the case of Poisson regression with weighted Lasso penalty. The
derivation of this type of oracle inequalities is simpler than that in \cite{Ivanoff16}. Similar to Lemma~5 of \cite{Zhang17}, the proof is based on the following key lemma.

\begin{lemma}
\label{eq:Poissonie3}
The Hessian matrix and gradient of the log-likelihood function in the
Poisson regression model are $\dot{\ell }(\beta ) = - \frac{1}{n}
\sum_{i = 1}^{n} {\phi ({{x}_{i}})[} {Y_{i}} - {e^{{\phi ^{T}}(
{{x}_{i}})\beta }}],\ddot{\ell }(\beta ) = \frac{1}{n}\sum_{i =
1}^{n} {\phi ({{x}_{i}}){\phi ^{T}}({{x}_{i}})} {e^{{\phi ^{T}}({{x}
_{i}})\beta }}$. Assume the
identifiability condition: ${\phi ^{T}}({{x}_{i}})(\beta + \delta )={\phi ^{T}}({{x}_{i}})\beta $ implies ${\phi ^{T}}({{x}_{i}})\delta  = 0$, then we have
\begin{equation*}
{D^{s}}(\beta + \delta ,\beta ) \ge {\delta ^{T}}
\ddot{\ell }(\beta ) \delta {e^{ - {{\| {\varPhi ({{X}})} \|}_{\infty }}\| \delta \|_{1}}},
\end{equation*}
where ${\| {\varPhi ({{X}})} \|_{\infty }} = \max \{ |
{\phi _{j}({x_{ij}})}|;1 \le i \le n,1 \le j \le p\} $.
\end{lemma}
\begin{proof}
Without loss of generality, we assume that ${\phi ^{T}}({{x}_{i}})\delta
\ne 0$. By the expression of $\dot{\ell }(\beta )$ and $\ddot{\ell }(
\beta )$, one deduces
\begin{align*}
{\delta ^{T}} \bigl[\dot{\ell }(\beta +\delta ) - \dot{\ell }(\beta
) \bigr] &= {\delta ^{T}} \cdot \frac{1}{n}\sum
_{i = 1}^{n} {\phi ({{x}_{i}}) \bigl(
{e^{{\phi ^{T}}({{x}_{i}})(\beta + \delta )}} - {e^{{\phi ^{T}}({{x}
_{i}})\beta }} \bigr)}
\\
&= {\delta ^{T}} \cdot \frac{1}{n}\sum
_{i = 1}^{n} {\phi ( {{{x}}_{i}}){e^{{\phi ^{T}}({{x}_{i}})\beta }}
\cdot \frac{
{{e^{{\phi ^{T}}({{x}_{i}})\delta }} - 1}}{{{\phi ^{T}}({{x}_{i}})
\delta - 0}}} \cdot {\phi ^{T}}({{x}_{i}})
\delta
\\
& \ge {\delta ^{T}}\frac{1}{n}\sum
_{i = 1}^{n} {\phi ({{x}_{i}}) {\phi
^{T}}({{x}_{i}}){e^{{\phi ^{T}}({{x}_{i}})\beta }}} \delta \cdot
{e^{ - {{\| { {\varPhi ({{X}})}} \|}_{\infty }}\| \delta \|_{1}}},
\end{align*}
where the last inequality is from $
\frac{{{e^{x}} - {e^{y}}}}{{x - y}} \ge {e^{ - |x | \vee |y|}}$.
\end{proof}

\begin{theorem}\label{Poissontheo-oi}
For weighted lasso estimator $\hat \beta$ in Poisson regression, suppose ${|| {\Phi ({{X}})} ||_\infty } \le K$ with some $K > 0$ and let $\tau  = \frac{{K(\xi  + 1){d^*}w_{\max }^2}}{{2{w_{\min }}{{\left[ {C(\xi {w_{\max }}/{w_{\min }},H)} \right]}^2}}} \le {e^{ - 1}}$ with a certain $K > 0$. Then, in the event ${{{|| {\dot \ell({\beta ^{\rm{*}}})} ||}_\infty } \le \frac{{\xi  - 1}}{{\xi  + 1}}{w_{\min }}}$, we have
\[{|| {\hat \beta  - {\beta ^{\rm{*}}}} ||_1} \le \frac{{{e^{{a_\tau}}}(\xi  + 1){d^*}w_{\max }^2}}{{2{w_{\min }}{{\left[ {C(\xi {w_{\max }}/{w_{\min }},H)} \right]}^2}}}\]
where ${a_\tau } \le 1$ is smaller solution of the equation $a {e^{ - a }} = \tau$.
And
\[{D^s}(\hat \beta ,{\beta ^*}) \le \frac{{4{e^{{a_\tau }}}{\xi ^2}{d^*}w_{\max }^2}}{{{{(\xi  + 1)}^2}{{\left[ {C(\xi {w_{\max }}/{w_{\min }},H)} \right]}^2}}}\]
\end{theorem}
\begin{proof}
Note that ${a_\tau }$ is the small solution of $z {e^{ - z}} = \tau $ in Poisson regression. By Lemma~\ref{lem:ulb} and Lemma~\ref{eq:Poissonie3}, the proof is almost exactly the same as proof of Theorem~\ref{theo-oi}.
\end{proof}

\section{Simulations}
\label{s:simu}
\begin{table}[!htb]
\centering
\caption{Means{} of $\ell_1$ error and prediction error}
\label{tab:simu-1}
\begin{tabular}{lcc}
    \hline
    % after \: \hline or \cline{col1-col2} \cline{col3-col4} ...
    \textbf{Method} & $\Vert \hat{\boldsymbol{\beta}} - \boldsymbol{\beta}^{*} \Vert_{1}$ & \textbf{Prediction error} \\ \hline
    \multicolumn{3}{c}{$p = 20, \rho = 0.5$} \\
    Lasso            &  1.902  &  7.161 \\
    Weight Lasso    &  1.624  &  6.147 \\

    \multicolumn{3}{c}{$p = 20, \rho = 0.8$} \\
    Lasso            &  3.001  &  7.155 \\
    Weight Lasso    &  2.917  &  6.979 \\

    \multicolumn{3}{c}{$p = 100, \rho = 0.5$} \\
    Lasso           &  4.137  &  15.452 \\
    Weight Lasso    &  3.905  &  13.866 \\
    \multicolumn{3}{c}{$p = 100, \rho = 0.8$} \\
     Lasso         &  4.543  &  10.866 \\
    Weight Lasso    &  4.552  &  10.695 \\
    \multicolumn{3}{c}{$p = 200, \rho = 0.5$} \\
     Lasso          &  4.467  &  17.533 \\
    Weight Lasso    &  4.373  &  15.854 \\
    \multicolumn{3}{c}{$p = 200, \rho = 0.8$} \\
   Lasso       &  4.467  &  17.533 \\
     Weight Lasso   &  4.373  &  15.854 \\
    \hline
\end{tabular}
\end{table}

This section aims to compare the performances of the un-weighted Lasso
and weighted Lasso for NB regression on simulated data sets. We use the
\texttt{R} package \texttt{mpath} with the function \texttt{glmregNB}
to fit the un-weighted Lasso estimator of NB regression. The
\texttt{R} package \texttt{lbfgs} is employed to do a weighted
$\ell _{1}$-penalized optimization for NB regression. For weighted Lasso,
we first run an un-weighted Lasso estimator and find the optimal tuning
parameter ${\lambda _{\mathrm{{op}}}}$ by cross-validation. The actual
weights we use are the standardized weights given by
\begin{equation*}
\tilde{w}_{j}: = p \frac{{w_{j}}}{{\sum_{j = 1}^{p} {w_{j}} }}.
\end{equation*}
And then we solve the optimization problem

\begin{equation}
\label{eq:Wlasso1} \hat{\beta }^{(t)}= \argmin_{\beta \in
\mathbb{R}^{p}} \Biggl
\lbrace - \ell (\beta )+ {\lambda _{\mathrm{{op}}}} \sum
_{j=1}^{p} \tilde{w}_{j} |
{{\beta _{j}}} |\Biggr\rbrace .
\end{equation}

Here we first apply the function \texttt{cv.glmregNB()} to do a 10-fold
cross-validation to obtain the optimal penalty parameter $\lambda $. For
comparison, we use the function \texttt{glmregNB} to do
un-weighted Lasso for NB regression and use the estimated $\theta $ as
the initial value for our weighted Lasso algorithms.

For simulations, we simulate 100 random data sets. By optimization with
suitable $\lambda $, we obtain the model with the parameter
$\hat{\beta }_{\lambda _{\mathrm{opt}}}$. Then we estimate the
performance of the $\ell _{1}$ estimation error $\Vert \beta ^{*} -
\hat{\beta }_{\lambda _{\mathrm{opt}}} \Vert _{1}$ and the prediction
error $ \Vert X_{\mathrm{test}} \beta ^{*} - X_{\mathrm{test}}
\hat{\beta }_{\lambda _{\mathrm{opt}}} \Vert _{2}$ on the test data ($X
_{\mathrm{test}}$ of size $n_{\mathrm{test}}$) by the average of the
100-times errors.

For each simulation $n_{\mathrm{train}} = 100$, $n_{\mathrm{test}} = 200$.
We mainly adopt the simulation setting: The predictor variables $X$ are
randomly drawn from ${N}_{p}(0, \varSigma )$, where $\varSigma $ has elements
$\rho _{|j - l|}$ ($j, l = 1, 2, \ldots , p$). The correlation among
predictor variables is controlled by $\rho $ with $\rho = 0.5\mbox{ and }0.8$,
respectively. We assign the true vector as
\begin{equation*}
\beta ^{*} =( 1.25, -0.95, 0.9, -1.1, 0.6, \underbrace{0, \ldots ,
0} _{p - 5} ).
\end{equation*}

The simulation results are displayed in Table~\ref{tab:simu-1}. The
table shows that the proposed weighted Lasso estimators are more
accurate than the un-weighted Lasso estimators. With the help of the
optimal weights, it reflects that the controlling of KKT conditions by
a data-dependent tuning parameter is able to improve the accuracy of the
estimation in aspects of the $\ell _{1}$-estimation, square prediction
errors. It can be also seen that the increasing $p$ will hinder the
$\ell _{1}$-estimation error by the curse of dimensionality.

\section{Appendix: Proofs}

\subsection{Proof of Theorem~\ref{thm:ccdp}}
The proof is divided into two parts. In the first part, we show that
$\nu (A)$ is countably additive and absolutely continuous. Moreover, in the
second part, we will show Eq.~(\ref{eq:xu1}).

\textit{Step} 1.

(1) Note that $P_{k}(A)>0$ and $\sum_{k=0}^{\infty }P_{k}(A)=1$
by assumption 1, we have $0<P_{k}(A)<1$ whenever $0<m(A)<\infty $. And
from assumption 2, we get $P_{0}(A)=1$ if $m(A)=0$. It follows that
$0<P_{0}(A)\leq 1 \text{ if }0<m(A)<\infty $. By assumptions 2 and 4,
one has $\lim_{m(A)\to 0}\nu (A)=\lim_{m(A)\to 0}-
\log {[1-S_{1}(A)]}=0$. Therefore, $\nu (\cdot )$ is absolutely
continuous.

(2) Let $\nu (A):=-\log {P_{0}(A)}$. For $A_{1}\cap A_{2}=\emptyset $,
it implies by assumption 3 that
\begin{equation*}
\nu (A_{1}\cup A_{2})=-\log {P_{0}(A_{1}
\cup A_{2})}=-\log { \bigl[P_{0}(A _{1})
\cdot P_{0}(A_{2}) \bigr]}=\nu (A_{1})+\nu
(A_{2}),
\end{equation*}
which shows that $\nu (\cdot )$ is finitely additive.\vadjust{\goodbreak}

Based on the two arguments in \textit{Step} 1, we have shown that
$\nu ( \cdot )$ is countable additive.

\textit{Step} 2. For convenience, we first consider the case that
$A=I_{\mathbf{a},\mathbf{b}}=(\mathbf{a}, \mathbf{b}]$ is a
$d$-dimensional interval on $\mathbb{R}^{d}$, and then extend it to any
measurable sets. Let $T({x}) =:\nu ({I_{0,{x}}})$, so $T({x})$ is a
$d$ dimensional coordinately monotone transformation with
\begin{equation*}
T(\mathbf{0})=\mathbf{0},\qquad T({x}) \le T \bigl({x}' \bigr)
\quad \text{iff}\quad {x} \le {x}',
\end{equation*}
where the componentwise order symbol (partially ordered symbol) ``$
\le $'' means that each coordinate in ${x}$ is less than or equal to the
corresponding coordinate of ${x}'$ (for example, with $d=2$, we have
$(1,2)\le (3,4)$). This is by the fact that $\nu (\cdot )$ is absolutely
continuous and finite additive, which implies $\nu ({I_{\mathbf{0},
\mathbf{0}}}) =  \lim_{m({I_{\mathbf{0},{x}}}) \to 0}
\nu ({I_{\mathbf{0},{x}}}) = 0$.

The definition of the minimum set of the componentwise ordering relation
lays the base for our study; see Sect.~4.1 of Dinh The Luc \cite{Dinh16}. Let
$A$ be a nonempty set in $\mathbb{R}^{+d}$. A point $\mathbf{a}
\in A$ is called a \emph{Pareto minimal point} of the set $A$ if there
is no point $a' \in A$ such that $\mathbf{a}' \ge \mathbf{a}$ and
$\mathbf{a}' \ne \mathbf{a}$. The sets of Pareto minimal points of
$A$ is denoted by $\operatorname{PMin}(A)$. Next, we define the generalized
inverse ${T^{ - 1}}(t)$ (like some generalizations of the inverse
matrix, it is not unique, we just choose one) by
\begin{equation*}
{x} = ({x_{1}}, \ldots ,{x_{d}}) ={T^{ - 1}}(t)=:
\operatorname{PMin} \bigl\{ \mathbf{a} \in \mathbb{R}^{+d}:t \le T(
\mathbf{a}) \bigr\} .
\end{equation*}

When $d=1$, the generalized inverse ${T^{ - 1}}(t)$ is just a similar
version of the quantile function.

By Theorem~4.1.4 in Dinh The Luc \cite{Dinh16}, if $A$ is a nonempty compact
set, then it has a Pareto minimal point. Set ${x}=T^{-1}(t)$,
${x}+\boldsymbol{\xi }=T^{-1}(t+\tau )$ and define $\varphi _{k}(\tau ,t)=P
_{k}(I_{{x},{x}+\xi })$ for $t=T({x})$, $\tau =T({x}+\boldsymbol{\xi })-T(
{x})$.

Consequently, by part one, finite additivity of $\nu (\cdot )$, it
implies that
\begin{equation*}
\tau =T({x}+\boldsymbol{\xi })-T({x})=\nu (I_{0,{x}+\boldsymbol{\xi }})- \nu
(I_{\mathbf{0},{x}})=\nu (I_{{x},{x}+\boldsymbol{\xi }})=-\log {P_{0}(I
_{{x},{x}+\boldsymbol{\xi }})}.
\end{equation*}
Thus, we have $\varphi _{0}(\tau ,t)=e^{-\tau }$. Let $\varPhi _{k}(\tau ,t)=
\sum_{i=k}^{\infty }\varphi _{i}(\tau ,t)$. Due to assumption 5,
it follows that
\begin{equation}
\label{eq:assumption 5} \alpha _{k}=\lim_{\tau \to 0}
\frac{\varphi _{k}(\tau ,t)}{\varPhi
_{1}(\tau ,t)}=\lim_{\tau \to 0}\frac{\varphi _{k}(\tau ,t)}{
\tau }\quad
\text{for }k\geq 1.
\end{equation}
Applying assumption 3, we get $\varphi _{k}(\tau +h,t)=\sum_{i=0}
^{k}\varphi _{k-i}(\tau ,t)\varphi _{i}(h,t+\tau )$. Subtracting
$\varphi _{k}(\tau ,t)$ and dividing by $h$ ($h > 0$) in the above
equation, then
\begin{equation}
\label{eq:xu3} \frac{\varphi _{k}(\tau +h,t)-\varphi _{k}(\tau ,t)}{h}=\varphi _{k}( \tau ,t)\cdot
\frac{\varphi _{0}(h,t+\tau )-1}{h}+\sum_{i=1} ^{k}
\varphi _{k-i}(\tau ,t)\cdot \frac{\varphi _{i}(h,t+\tau )}{h}.
\end{equation}
Here the meaning of $h$ in $\varphi _{i}(h,t+\tau )$ is that
\begin{equation*}
h=T({x}+\boldsymbol{\xi }+\boldsymbol{\eta })-T({x}+\boldsymbol{\xi })=\nu
(I_{
\mathbf{0},{x}+\boldsymbol{\xi }+\boldsymbol{\eta }})-\nu (I_{\mathbf{0},
{x}+\xi })=\nu (I_{{x}+\xi , {x}+\boldsymbol{\xi }+\boldsymbol{\eta }}),
\end{equation*}
where ${x}+\boldsymbol{\xi }+\boldsymbol{\eta }=T^{-1}(t+\tau +h)$ and
${x}+\boldsymbol{\xi }=T^{-1}(t+\tau )$.\vadjust{\goodbreak}

Let $h \to 0$ in (\ref{eq:xu3}). By using assumption 5 and its
conclusion (\ref{eq:assumption 5}), it implies that
\begin{equation}
\label{eq:xu4} {{\varphi '}_{k}}(\tau ,t) =:
\frac{\partial }{{\partial \tau }}{\varphi _{k}}( \tau ,t) = - {\varphi
_{k}}(\tau ,t) + {\alpha _{1}} {\varphi
_{k - 1}}( \tau ,t) + {\alpha _{2}} {\varphi
_{k - 2}}(\tau ,t) + \cdots + {\alpha _{k}} {\varphi
_{0}}(\tau ,t) ,
\end{equation}
which is a difference--differential equation.

To solve \eqref{eq:xu4}, we need to write it in matrix form,
\[\frac{{\partial {{\bf{P}}_k}(\tau ,t)}}{{\partial \tau }} := \left( {\begin{array}{*{20}{c}}
{{{\varphi '}_k}(\tau ,t)}\\
 \vdots \\
{{{\varphi '}_1}(\tau ,t)}\\
{{{\varphi '}_0}(\tau ,t)}
\end{array}} \right) = \left( {\begin{array}{*{20}{c}}
{ - 1}&{{\alpha _1}}& \cdots &{{\alpha _k}}\\
{}& \ddots & \ddots &\begin{array}{l}
 \vdots \\
{\alpha _1}
\end{array}\\
{}&{}& \ddots &{ - 1}
\end{array}} \right)\left( {\begin{array}{*{20}{c}}
{{\varphi _k}(\tau ,t)}\\
 \vdots \\
{{\varphi _1}(\tau ,t)}\\
{{\varphi _0}(\tau ,t)}
\end{array}} \right) = :{\bf{Q}}{{\bf{P}}_k}(\tau ,t)\]

The general solution is
\begin{equation*}
\mathbf{P}_{k}(\tau , t)=e^{\mathbf{Q}\tau }\cdot \mathbf{c}=: \sum
_{m=0}^{\infty }\frac{\mathbf{Q}^{m}}{m!}\tau
^{m}\cdot \mathbf{c},
\end{equation*}
where $\mathbf{c}$ is a constant vector. To specify $\mathbf{c}$, we
observe that
\begin{equation*}
(0,\ldots ,0,1)^{\mathrm{T}}=\lim_{\tau \to 0}
\mathbf{P}_{k}( \tau , t)=\lim_{\tau \to 0}e^{\mathbf{Q}\tau }
\mathbf{c}= \mathbf{c}.
\end{equation*}
Hence, the $\mathbf{Q}$ can be written as
\begin{equation*}
\mathbf{Q}=-\mathbf{I}_{k+1}+\alpha _{1} \mathbf{N}+
\alpha _{2} \mathbf{N}^{2}+\cdots +\alpha
_{k}\mathbf{N}^{k},
\end{equation*}
where $\mathbf{N}=: \begin{pmatrix}\mathbf{0}_{k\times 1}& \mathbf{I}_k\\0& \mathbf{0}_{1\times k}\end{pmatrix}$. As $i\geq k+1$, we have $\mathbf{N}^{i}=\mathbf{0}$.

We use the expansion of the power of the multinomial
\begin{equation*}
\frac{1}{{m!}}{ \Biggl( - {\mathbf{{I}}_{k + 1}} + \sum
_{i = 1}^{k} {{\alpha _{i}}}
{ \mathbf{{N}}^{i}} \Biggr)^{m}} = \sum
_{{s_{0}} + {s_{1}} + \cdots + {s_{k}} = m} {\frac{{{{( -
{\mathbf{{I}}_{k + 1}})}^{{s_{0}}}}\alpha _{1}^{{s_{1}}} \cdots \alpha
_{k}^{{s_{k}}}}}{{{s_{0}}!{s_{1}}! \cdots {s_{k}}!}}} {\mathbf{{N}}
^{{s_{1}} + 2{s_{2}} + \cdots + k{s_{k}}}}.
\end{equation*}
Then
\begin{align}
{{\mathbf{P}_{k}}(\tau ,t)} &= \sum_{m = 0}^{\infty }{
\sum_{{s_{0}} + {s_{1}} + \cdots + {s_{k}} = m} {\frac{{{{( -
{\mathbf{{I}}_{k + 1}})}^{{s_{0}}}}\alpha _{1}^{{s_{1}}} \cdots \alpha
_{k}^{{s_{k}}}}}{{{s_{0}}!{s_{1}}! \cdots {s_{k}}!}}} {\mathbf{{N}}
^{{s_{1}} + 2{s_{2}} + \cdots + k{s_{k}}}} {\tau ^{{s_{0}} + {s_{1}} +
\cdots + {s_{k}}}} {\mathbf{{c}}}}
\nonumber
\\
&=\sum_{m = 0}^{\infty }{\sum
_{{s_{0}} = 0}^{m} {{\tau ^{
{s_{0}}}}
\frac{{{{( - 1)}^{{s_{0}}}}}}{{{s_{0}}!}}\sum_{{s_{1}} + \cdots + {s_{k}} = m - {s_{0}}} {
\frac{{\alpha _{1}
^{{s_{1}}} \cdots \alpha _{k}^{{s_{k}}}}}{{{s_{1}}! \cdots {s_{k}}!}}} } {\mathbf{{N}}^{{s_{1}} + 2{s_{2}} + \cdots + k{s_{k}}}} {\tau
^{{s_{1}} +
\cdots + {s_{k}}}} {\mathbf{{c}}}}
\nonumber
\\
&= \sum_{{s_{0}} = 0}^{\infty }{\sum
_{m - {s_{0}} = 0} ^{\infty }{{\tau ^{{s_{0}}}}
\frac{{{{( - 1)}^{{s_{0}}}}}}{{{s_{0}}!}} \sum_{{s_{1}} + \cdots + {s_{k}} = m - {s_{0}}} {
\frac{{\alpha
_{1}^{{s_{1}}} \cdots \alpha _{k}^{{s_{k}}}}}{{{s_{1}}!
\cdots {s_{k}}!}}} } {\tau ^{{s_{1}} + \cdots + {s_{k}}}} {\mathbf{{N}}
^{{s_{1}} + 2{s_{2}} + \cdots + k{s_{k}}}} {\mathbf{{c}}}}
\nonumber
\\
(\text{let } r = m - {s_{0}}) & = \sum
_{{s_{0}} = 0}^{
\infty }{{\tau ^{{s_{0}}}}
\frac{{{{( - 1)}^{{s_{0}}}}}}{{{s_{0}}!}} \sum_{r = 0}^{\infty }{
\sum_{{s_{1}} + \cdots + {s_{k}}
= r} {\frac{{\alpha _{1}^{{s_{1}}} \cdots \alpha _{k}^{{s_{k}}}}}{{{s
_{1}}! \cdots {s_{k}}!}}} } {\tau
^{{s_{1}} + \cdots + {s_{k}}}} {\mathbf{{N}} ^{{s_{1}} + 2{s_{2}} + \cdots + k{s_{k}}}} {\mathbf{{c}}}}
\nonumber
\\
&= {e^{ - \tau }}\sum_{r = 0}^{\infty }{
\sum_{{s_{1}} + \cdots + {s_{k}} = r} {\frac{{\alpha _{1}^{{s_{1}}}
\cdots \alpha _{k}^{{s_{k}}}}}{{{s_{1}}! \cdots {s_{k}}!}}} } {\tau
^{{s
_{1}} + \cdots + {s_{k}}}} {\mathbf{{N}}^{{s_{1}} + 2{s_{2}} + \cdots
+ k{s_{k}}}} {\mathbf{{c}}}
\nonumber
\\
&=\sum_{l = 0}^{\infty }{\sum
_{R(s,k) = l} {\frac{
{\alpha _{1}^{{s_{1}}} \cdots \alpha _{k}^{{s_{k}}}}}{{{s_{1}}! \cdots
{s_{k}}!}}{\tau ^{{s_{1}} + \cdots + {s_{k}}}}
{e^{ - \tau }} {\mathbf{{N}} ^{l}} {\mathbf{{c}}}} }
\nonumber
\\
&= \sum_{l = 0}^{k} {\sum
_{R(s,k)
= l} {\frac{{\alpha _{1}^{{s_{1}}} \cdots \alpha _{k}^{{s_{k}}}}}{{{s
_{1}}! \cdots {s_{k}}!}}{\tau ^{{s_{1}} + \cdots + {s_{k}}}}
{e^{ -
\tau }} {\mathbf{{N}}^{l}} {\mathbf{{c}}}}
},\label{eq:Pk}
\end{align}
where $R(s,k)=:\sum_{t=1}^{k} ts_{t}$ and the last equality is
obtained by the fact that ${\mathbf{N}^{l}}\mathbf{c} = (0,
\ldots ,0,1,\allowbreak \underbrace{0, \ldots ,0}_{l})^{T}$ and $\mathbf{N}^{l}=
\mathbf{0}$ as $l\geq k+1$.

Choose the first element in \eqref{eq:Pk}. We show that $\varphi _{k}(
\tau , t)=\sum_{R(s,k)=k}\frac{\alpha _{1}^{s_{1}}\cdots \alpha
_{k}^{s_{k}}}{s_{1}!\cdots s_{k}!}\tau ^{s_{1}+\cdots +s_{k}} e^{-
\tau }$. By the relation of $\varphi _{k}(\tau , t)$ and $P_{k}(A)$, we
obtain
\begin{equation}
\label{eq:xu2} P_{k}(I_{0,\boldsymbol{\xi }})=\sum
_{R(s,k)=k}\frac{\alpha _{1}^{s
_{1}}\cdots \alpha _{k}^{s_{k}}}{s_{1}!\cdots s_{k}!} \bigl[\nu (I_{0,
\boldsymbol{\xi }})
\bigr]^{s_{1}+\cdots +s_{k}}\cdot e^{-\nu (I_{0,
\boldsymbol{\xi }})}.
\end{equation}

\textit{Step} 3. We need to show the countable additivity in terms of
(\ref{eq:xu2}). Let $A_{1}$, $A_{2}$ be two disjoint intervals, we first
show that
\begin{equation}
\label{eq:xu8} P_{k}(A_{1}\cup A_{2})=
\sum_{R(s,k)=k}\frac{\alpha _{1}^{s_{1}}
\cdots \alpha _{k}^{s_{k}}}{s_{1}!\cdots s_{k}!} \bigl[\nu
(A_{1}\cup A_{2}) \bigr]^{s
_{1}+\cdots +s_{k}}
e^{-\nu (A_{1}\cup A_{2})} ,
\end{equation}
which means that (\ref{eq:xu2}) is closed under a finite union of the
$A_{i}$.

By assumption 3, to show (\ref{eq:xu8}), it is sufficient to verify
$\sum_{i=0}^{k}P_{k-i}(A_{1})P_{i}(A_{2})$ equals the right-hand
side of (\ref{eq:xu8}). From the m.g.f. of the DCP distribution with
complex $\theta $, we have
\begin{equation*}
{M_{{A_{i}}}}(\theta ) =: \mathrm{{E}} {e^{ - \theta N({A_{i}})}} = \exp
\Biggl\{ \lambda ({A_{i}})\sum_{a = 1}^{\infty }{{
\alpha _{a}} \bigl( {e^{ - a\theta }} - 1 \bigr)} \Biggr\}\quad
\text{for } i=1,2.
\end{equation*}
And by the definition of m.g.f. and assumption 3,
\begin{align*}
{M_{{A_{1}} \cup {A_{2}}}}(\theta ) &= \sum_{k = 0}^{ + \infty}
{{P_{k}}({A_{1}} \cup {A_{2}}){e^{ - k\theta }}}
= \sum_{k =
0}^{ + \infty } { \Biggl( {\sum
_{i = 0}^{k} {{P_{k - i}}} (
{A_{1}}){P_{i}}({A_{2}})} \Biggr)}
{e^{ - k\theta }}
\\
&= \Biggl( {\sum_{j = 0}^{ + \infty }
{{P_{j}}({A_{1}}){e^{ -
j\theta }}} } \Biggr)
\Biggl( {\sum_{s = 0}^{ + \infty } {{P
_{s}}({A_{2}}){e^{ - s\theta }}} } \Biggr)
\\
&={M_{{A_{1}}}}(\theta ){M_{{A_{2}}}}(\theta )=\exp \Biggl\{
\bigl[\nu ({A_{1}}) + \nu ({A_{1}}) \bigr]\sum
_{a = 1}^{\infty }{{\alpha _{a}}
\bigl({e^{ - a
\theta }} - 1 \bigr)} \Biggr\}
\\
&=\exp \Biggl\{ \bigl[\nu ({A_{1}} + {A_{2}}) \bigr]
\sum_{a = 1}^{\infty } {{\alpha
_{a}} \bigl({e^{ - a\theta }} - 1 \bigr)} \Biggr\}
\\
&=\sum_{k = 0}^{\infty }{\sum
_{R(s,k) = k} {\frac{
{\alpha _{1}^{{s_{1}}} \cdots \alpha _{k}^{{s_{k}}}}}{{{s_{1}}! \cdots
{s_{k}}!}}} {{ \bigl[\nu
({A_{1}} \cup {A_{2}}) \bigr]}^{{s_{1}} + \cdots + {s
_{k}}}}
{e^{ -\nu ({A_{1}} \cup {A_{2}})}}} {\mathrm{{e}}^{ - k\theta }}.
\end{align*}
Thus, we show (\ref{eq:xu8}).

Finally, let $E_{n}=\bigcup_{i=1}^{n} A_{i}$,
$E=\bigcup_{i=1}^{\infty }A_{i}$ for disjoint intervals $A_{1}, A_{2}, \ldots $. We use the following lemma in Copeland and Regan \cite{Copeland36}.

\begin{lemma}
If we have assumptions 1--4, let ${e_{n}} = \{ E\backslash (E \cap
{E_{n}})\} \cup \{ {E_{n}}\backslash (E \cap {E_{n}})\} $. Then\vspace*{-1.5pt}
\begin{equation*}
\lim_{n\to \infty } m(e_{n})=0 \quad \textit{implies}
\quad \lim_{n\to \infty } P_{k}(E_{n})=P_{k}(E).
\end{equation*}
\end{lemma}

To see this, since $e_{n}=\bigcup_{i=n+1}^{\infty }A_{i}$, we
have $m(e_{n})=\sum_{i=n+1}^{\infty }m(A_{i}) \to 0$ as
$n \to \infty $. Thus, we obtain the countable additivity with respect
to (\ref{eq:xu2}).

\subsection{Proof of Corollary~\ref{col:ce}}
Let $C{P_{i,{r_{i}}}}(A) =: \sum_{k = 1}^{r_{i}}
{k{N_{i,k}}(A)}$, where $\{{N_{i,k}}(A)\}_{i=1}^{n}$ are independent Poisson point process with intensity ${{\alpha _{k}(i)}\int _{A} {\lambda (x)} \,dx}$ for
each fixed $k$. By Campbell's theorem, it follows that
\begin{align}\label{eq:LF}
{\rm{E}}\exp \{  - \sum\limits_{i = 1}^n {\int_{{S_i}} {f_i(x)} C{P_{i,{r_i}}}(dx)} \} & = {\rm{E}}\exp \{  - \sum\limits_{i = 1}^n {\int_{{S_i}} {f_i(x)} \sum\limits_{k = 1}^{{r_i}} {k{N_{i,k}}(dx)} } \}\nonumber\\
& = \prod\limits_{i = 1}^n {\prod\limits_{k = 1}^{{r_i}} {\rm{E}} \exp \{  - \int_{{S_i}} {kf_i(x)} {N_{i,k}}(dx)\} }\nonumber\\
& = \prod\limits_{i = 1}^n {\prod\limits_{k = 1}^{{r_i}} {\exp \{ \int_{{S_i}} {[{e^{ - kf_i(x)}}}  - 1]{\alpha _k}(i)\lambda (x)dx\} } }\nonumber\\
& = \exp \{ \sum\limits_{i = 1}^n {\sum\limits_{k = 1}^{{r_i}} {\int_{{S_i}} {[{e^{ - kf_i(x)}}}  - 1]{\alpha _k}(i)\lambda (x)dx} } \}.
\end{align}
For $\eta>0$, define the normalized exponential transform of the stochastic integral by $D_{\{ r_i\} }(\eta )$:
\begin{equation} \label{eq:ie1A}
\log{D_{\{ r_i\} }(\eta )}: = \eta \sum\limits_{i = 1}^n {\int_{{S_i}} {f_i(x)} [C{P_{i,{r_i}}}(dx) - \sum\limits_{k = 1}^{{r_i}} {k{\alpha _k}(i)\lambda (x)} ]dx}  - \int_{{S_i}} {\sum\limits_{k = 1}^{{r_i}} {[{e^{kf_i(x)}} - k\eta f_i(x) - 1]{\alpha _k}(i)\lambda (x)} } dx.
\end{equation}
Therefore, we obtain ${\rm{E}} D_{\{ r\} }(\eta )= 1$ by \eqref{eq:LF}.

It follows from (\ref{eq:ie1A}) and Markov's inequality, we have
\begin{align} \label{eq:ie2A}
&P\left( {\eta \sum\limits_{i = 1}^n {\int_{{S_i}} {f_i(x)} [C{P_{i,{r_i}}}(dx) - \sum\limits_{k = 1}^{{r_i}} {k{\alpha _k}(i)\lambda (x)} ]dx}  \ge \sum\limits_{i = 1}^n {\int_{{S_i}} {\sum\limits_{k = 1}^{{r_i}} {[{e^{kf_i(x)}} - k\eta f_i(x) - 1]{\alpha _k}(i)\lambda (x)} } dx}  + ny} \right)\nonumber\\
& = P(D_{\{ r\} }(\eta ) \ge {e^{ ny}}) \le {e^{ - ny}}.
 \end{align}
By inequality \eqref{eq:111}, the (\ref{eq:ie2}) implies
\begin{equation} \label{eq:222A}
P\left( {\sum\limits_{i = 1}^n {\int_{{S_i}} {f_i(x)} [C{P_{i,{r_i}}}(dx) - \sum\limits_{k = 1}^{{r_i}} {k{\alpha _k}(i)\lambda (x)} ]dx}  \ge \sum\limits_{i = 1}^n {\int_{S_i} {\sum\limits_{k = 1}^{{r_i}} {{\alpha _k}(i)\left( {\frac{1}{2}\frac{{{k^2}\eta {f_i^2}(x)\lambda (x)}}{{1 - \frac{1}{3}k\eta {{\left\| f_i \right\|}_\infty }}}dx} + \frac{{y}}{\eta } \right)} } } } \right) \le {e^{ - ny}}.
\end{equation}
Let ${V_{i,f_i}} = \int_{S_i} {{f_i^2}(x)\lambda (x)dx} $ for $i=1,2,\cdots,n$. By basic inequality, we have
\begin{align} \label{eq:333A}
~~~~&\sum\limits_{i = 1}^n {\sum\limits_{k = 1}^{{r_i}} {{\alpha _k}(i)\left( {\frac{1}{2}\frac{{\eta {k^2}{V_{i,f_i}}}}{{1 - \frac{1}{3}k\eta {{\left\| f_i \right\|}_\infty }}} + \frac{y}{\eta }} \right)} }\nonumber\\
 & = \sum\limits_{i = 1}^n {\sum\limits_{k = 1}^{{r_i}} {{\alpha _k}(i)\left( {\frac{1}{2}\frac{{\eta {k^2}{V_{i,f_i}}}}{{1 - \frac{1}{3}k\eta {{\left\| f_i \right\|}_\infty }}} + \frac{y}{\eta }(1 - \frac{1}{3}k\eta {{\left\| f_i \right\|}_\infty }) + \frac{{ky}}{3}{{\left\| f_i \right\|}_\infty }} \right)} } \nonumber\\
& \ge \sum\limits_{i = 1}^n {\sum\limits_{k = 1}^{{r_i}} {{\alpha _k}(i)\left( {k\sqrt {2{V_{i,f_i}}y}  + \frac{{ky}}{3}{{\left\| f_i \right\|}_\infty }} \right)}  = \sum\limits_{i = 1}^n\sum\limits_{k = 1}^{{r_i}} {k{\alpha _k}} (i)[\sqrt {2{V_{i,f}}y}  + \frac{y}{3}{{\left\| f_i \right\|}_\infty }]}.
\end{align}
Optimizing $\eta$ in \eqref{eq:222A} by the \eqref{eq:333A}, we have
\begin{equation} \label{eq:ie22A}
P\left( {\sum\limits_{i = 1}^n {\int_{{S_i}} {f(x)} [C{P_{i,{r_i}}}(dx) - \sum\limits_{k = 1}^{{r_i}} {k{\alpha _k}(i)\lambda (x)} ]dx}  \ge \sum\limits_{i = 1}^n {\sum\limits_{k = 1}^{{r_i}} {k{\alpha _k}} (i)(\sqrt {2y{V_{i,f_i}}} }  + \frac{y}{3}{{\left\| f_i \right\|}_\infty })} \right) \le {e^{ - ny}}.
\end{equation}
For $i=1,2,\cdots$, let $r_i \to \infty $ in (\ref{eq:ie22A}), then $CP_{i,r_{i}}(A) \xrightarrow{d} CP_{i}(A)$. It implies the concentration inequality (\ref{eq:ie0A}).

Define ${c_{1n}} := \sum\limits_{i = 1}^n {{\mu _i}} \sqrt {2{V_{i,f}}} ,{c_{2n}} := \sum\limits_{i = 1}^n {\frac{{{\mu _i}}}{3}}{\left\| f_i \right\|_\infty }$, let $t={{c_{1n}}\sqrt {y}  + {c_{2n}}y}$, and we obtain the expression of $y$ by solving a quadratic equation.

\section{Summary and discussion}
The paper contributes three parts. In the first part, we provide a characterization
of discrete compound Poisson point processes (DCPP) in the same fashion as it has been
done in 1936 by Arthur H. Copeland and Francis Regan for the Poisson process. The
second part is focused on deriving concentration inequalities for DCPP, which
are applied in the third part to derive optimal oracle inequalities of weighted Lasso in high-dimensional NB regression.

Oracle inequalities for discrete distributions are statistically useful in both the non-asymptotic and asymptotical analysis of count data regression. The motivation for deriving new concentration inequalities for DCP processes is that we want to show that the KKT conditions of the NB regressions with Lasso penalty (with a tuning parameter $\lambda$) is a high-probability event. The KKT conditions is based on the concentration equality for centralized weighted sum of NB random variables. However, existing concentration results are not friendly applicable to the aim of optimal inference procedure. The optimal inference procedure here is to choose a optimal tuning parameter for Lasso estimates in high-dimensional NB regressions, which lead to the minimax optimal convergence for the estimator by considering the oracle inequalities for $\ell_1$-estimation error. In the future, it is of interest to study the concentration inequalities for other extended Poisson distributions in regression analysis such as Conway-Maxwell-Poisson distribution, see \cite{Li2018} for the existing probability properties. In another direction, our proposed concentration inequalities would be desirable to extend to establish oracle inequalities for penalized projection estimators studied by \cite{Reynaud-Bouret03}, when considering the intensity of some inhomogeneous compound Poisson processes.

\section{Acknowledgments}
This work is part of the doctoral thesis of the first author who would like to show sincere gratefulness to one of my advisor Professor Jinzhu Jia for his guidance, and candidate Dr. Xiangyang Li for his assistance of simulations. The authors also express their thanks to the two anonymous referees for their valuable comments which greatly improved the quality of our manuscript. In writing the early version of this paper, Professor Patricia Reynaud-Bouret provided several helpful comments and materials about the concentration inequalities, to whom warm thanks are due.

%\section{Funding}
%No funding is used to support this paper.
%
%\section{Availability of data and materials}
%Not applicable.
%
%\section{Competing interests}
%The authors declare that they have no competing interests.
%
%\section{Authors' contributions}
%The authors completed the paper and approved the final manuscript.
%
%\section{Authors' information}
%
%\noindent \textbf{Huiming Zhang}:~~School of Mathematical Sciences and Center for Statistical Science, Peking University, Beijing, 100871, China.
%\vskip 2pt
%\noindent
%E-mail: zhanghuiming@pku.edu.cn
%\vskip 3pt
%
%\noindent \textbf{Xiaoxu Wu}:~~Mathematics Department, Rutgers University, Piscataway, NJ 08854, USA.
%\vskip 2pt
%\noindent
%E-mail: xw292@rutgers.edu

\section*{References}

\end{document}